\newtheorem{thm}{Theorem}
\newtheorem{lem}[thm]{Lemma}
\newtheorem{cor}[thm]{Corollary}
\theoremstyle{definition}
\theoremstyle{remark}
\newtheorem{rem}[thm]{Remark}
\numberwithin{equation}{section}
\newcommand{\X}{\mathbf X}
\newcommand{\A}{\mathbf A}
\newcommand{\V}{\mathbf V}
\newcommand{\schwach}{\stackrel{\mathcal D}{\longrightarrow}}
\newcommand{\vage}{\mbox{$\, \stackrel{v}{\longrightarrow} \,$}}
\newcommand{\leb}{\mbox{$\lambda\hspace{-.12cm}\lambda$}}
\begin{document}
\sloppy
\title[Scaling limits of coupled continuous time random walks ]{Scaling limits of coupled continuous time random walks and residual order statistics through marked point processes}

\author{Adam Barczyk}
\address{Mathematical Institute, Heinrich-Heine-University D\"usseldorf, D-40225 D\"usseldorf, Germany}
\email{barczyk\@@{}math.uni-duesseldorf.de}

\author{Peter Kern}
\address{Mathematical Institute, Heinrich-Heine-University D\"usseldorf, D-40225 D\"usseldorf, Germany}
\email{kern\@@{}math.uni-duesseldorf.de}

\date{\today}

\begin{abstract}
A continuous time random walk (CTRW) is a random walk in which both spatial changes represented by jumps and 
waiting times between the jumps are random. The CTRW is coupled if a jump and its preceding or following waiting time are dependent random 
variables (r.v.), respectively. 
The aim of this paper is to explain the occurrence of different limit processes for CTRWs with forward- or backward-coupling in 
Straka and Henry (2011) using marked point processes. We also establish a series representation for the different limits. The methods used also allow us to solve an 
open problem concerning residual order statistics by LePage (1981).
\end{abstract}

\keywords{continuous time random walk, operator stable law, L\'evy process, marked point process, order statistics, series representation.}

\subjclass[2010]{Primary 60F17, 60G50, 60G55; Secondary 60G18, 62G30.}

\maketitle

\baselineskip=18pt

\section{Introduction}
\label{Introduction}

Two i.i.d.\ sequences of $\mathbb R^+$-valued waiting times $(J_n)_{n \in \mathbb N}$ and of $\mathbb R^d$-valued jumps 
$(\X_n)_{n \in \mathbb N}$ yield two versions of a CTRW by 
\begin{align*}
S_{N_t} := \sum_{k=1}^{N_t} \X_k, ~S_{N_t +1} := \sum_{k=1}^{N_t+1} \X_k,
\end{align*}
where $N_t := \max\{n \in \mathbb N_0: \sum_{k=1}^n J_k \leq t\}$ is the number of jumps up to time $t$. 
The CTRW is coupled if the sequences $(J_n)_{n \in \mathbb N}$ and 
$(\X_n)_{n \in \mathbb N}$ are dependent. Typically we assume, that the sequence $(J_n,\X_n)_{n \in \mathbb N}$ is i.i.d. 
with unknown dependence between the waiting time $J_n$ and the jump $\X_n$ for fixed $n\in \mathbb N$. 
Using this dependence structure $S_{N_t}$ is called backward-coupled CTRW whereas $S_{N_t+1}$ is called forward-coupled CTRW. 
Both processes represent the position of a jumper at time $t$, but in the backward-coupled case the particle first waits for a 
time $J_1$ before jumping to $\X_1$, whereas in the forward-coupled case the particle jumps to $\X_1$ at time $t=0$ and then 
waits for a time $J_1$ and so on.
CTRW processes were 
introduced in \cite{montroll:1965} to study random walks on 
a lattice and have been studied intensively over the past few decades. Today there is a wide field of possible applications for CTRWs. 
They are used in physics to model phenomena of 
anomalous diffusion \cite{shlesinger:1982,MetKla:2000}. The jumps can also represent movements of an ensemble of particles being transported 
over the earth surface in geophysics \cite{schumer:2009} or represent log-returns in finance \cite{scalas:2004}. 
A comprehensive study of limit theorems for coupled CTRWs has been initiated in \cite{becker-kern:2004} covering previously known special models \cite{shlesinger:1982,klafter:1987,kotulski:1995} from physics. Coupling methods different from forward-/backward-coupling can be found in \cite{CheHofSok:2009,MagMetSzcZeb}, where certain correlations between the waiting times, respectively between the jumps were introduced.
A similar approach in a more general setting appears in \cite{jara:2011}, where coupling is introduced through a Markov chain $(Y_n)_{n\in\mathbb N}$ and waiting times, respectively jumps are modelled by $(J_n=\tau(Y_n),X_n=V(Y_n))$ for some measurable functions $\tau,V$ with values in $(0,\infty)$, respectively $\mathbb R$. Note that these couplings in general do not fulfill our i.i.d.\ assumption on the sequence $(J_n,\X_n)_{n \in \mathbb N}$.

The limiting distributions of forward- and backward-coupled CTRWs have been investigated by Straka and Henry \cite{henry:2010} using a continuous mapping approach on the space of their sample paths. Straka and Henry prove that the limiting processes of coupled CTRWs in general differ when waiting times precede or follow jumps, 
respectively. Also the differences between the properties of these processes are not marginal, cf. \cite{jurlewicz:2010}. 
Unfortunately, neither the continuous mapping 
approach used in \cite{henry:2010} nor the methods used in \cite{jurlewicz:2010} are adequate to point out why 
different scaling limits occur from a mathematical point of view. So a new approach to fill this gap is made using marked point processes here. Another explanation for the different scaling limits of forward- and backward-coupled CTRW models can be found when introducing a random clustering procedure as in \cite{WerJurMagWerTrz:2010,JurMeeSch,StaWer:2010}. A subsequent clustering of $M_n$ uncoupled i.i.d.\ waiting times and jumps $(J_i,\X_i)$, where $(M_n)_{n\in\mathbb N}$ is a sequence of positive integer valued i.i.d.\ random variables belonging to the domain of attraction of a $\gamma$-stable random variable with $0<\gamma<1$, leads to models with forward- and backward-coupling, regarding to the number of cluster jumps that have been occasionally or fully considered at the observation time. The corresponding scaling limits are described by over- and undershooting subordination in \cite{WerJurMagWerTrz:2010,JurMeeSch,StaWer:2010}, where the over- and undershoot process is again the scaling limit of a tightly coupled CTRW with one-dimensional jumps $X_i=J_i$ equal to the corresponding waiting times. The special models of tightly coupled CTRW scaling limits have been extensively studied in \cite{kotulski:1995,bertoin:1996,SzcZeb:2012} in connection with generalized arc-sine laws. Our approach using marked point processes will illuminate the occurence of different scaling limits in more general situations than the case of a tightly coupled CTRW.

Defining the 
time of the $n$-th jump by $T_n := \sum_{k=1}^n J_k$ we first study the limit behavior of the point processes which arise 
by marking each jump time with its occuring jump, respectively, i.e. we analyze
\begin{align} \label{pp.einleitung}
 \sum_{k=1}^n \varepsilon_{(T_k,\X_k)}, ~\sum_{k=1}^n \varepsilon_{(T_{k-1},\X_k)}.
\end{align}
It turns out that only the jumps with large norm contribute to the limit distributions of (\ref{pp.einleitung}), as it is 
already known for real-valued partial sums which converge to an infinitely divisible r.v. without Gaussian part, cf. \cite{barczyk:2010} 
and references therein. The methods 
used also solve an open problem concerning the convergence of residual order statistics by LePage, 
cf. \cite{lepage:1981}, \cite{lepagezinn:1981}, \cite{scheffler:1998}. The scaling limits of the CTRWs can be determined 
by summing up the marks of the points in (\ref{pp.einleitung}) which have a jump time occuring before time $t$. Hence in the scaling limit of forward-coupled CTRWs an additional big jump occurs compared to its backward-coupled version, which illuminates the difference between the processes. This 
approach also provides a series representation for the different scaling limits which might be of interest for simulation purposes. 
Since the resulting limit processes are not L\'evy processes, no efficient simulation algorithm is known yet.

\section{Preliminaries}

Let $(J_n,\X_n)_{n \in \mathbb N}$ be an i.i.d. sequence of $\mathbb R^+ \times \mathbb R^d$-valued r.v. Assume that 
$(J_1,\X_1)$ belongs to the generalized domain of attraction (GDOA) of a r.v.\ $(D,\A)$, where $D$ is stable with 
index $\alpha \in (0,1)$ and $\A$ is full operator stable with index $E\in GL(\mathbb R^d)$ and without Gaussian part. Note that 
by Theorem 7.2.1 of \cite{meerschaert:2001} the real parts of the eigenvalues of $E$ are greater than $1/2$. 
By classic results \cite[Theorem 14.14]{kallenberg:1986} and \cite[Theorem 4.1]{meerschaert:2004} this implies 
that there exist regularly varying sequences $(b_n)_{n \in \mathbb N} \in RV_{1/\alpha}$ and $(A_n)_{n \in \mathbb N} \in RV_{-E}$ such that the 
convergence
\begin{align} \label{vorraussetzung}
 \left(b_n^{-1} \sum_{k=1}^{\lfloor nt \rfloor} J_k ,~
\sum_{k=1}^{\lfloor nt \rfloor} (A_n \X_k - \mathbb E(A_n \X_k \mathbf 1_{\Vert A_n \X_k \Vert \leq \tau})) \right)_{t\geq 0}
\schwach (D(t),\A(t))_{t \geq 0}
\end{align}
holds in $D([0,\infty),\mathbb R^+ \times \mathbb R^d)$ for any $\tau>0$ such that for the L\'evy measure $\eta$ of $\A$ and the sphere 
$\mathbb S^{d-1}_\tau = \{x \in \mathbb R^d : \Vert x \Vert = \tau \}$ we have 
$\eta(\mathbb S^{d-1}_\tau)=0$, 
i.e. the sphere $\mathbb S^{d-1}_\tau$ is a continuity set for $\eta$. Throughout this paper, spaces of c\`adl\`ag paths $D([0,\infty),\mathcal X)$, respectively $D([0,T],\mathcal X)$, are always equipped with the corresponding Skorokhod ${\rm J}_1$-topology. Here the process $D(\cdot)$ 
denotes an $\alpha$-stable subordinator and $\A(\cdot)$ denotes an operator L\'evy motion. 
Note that the drift term of the L\'evy process $\A$ depends on $\tau$. It is well known that we can choose $\tau = \infty$ 
if the real part of any eigenvalue of $E$ belongs to $(1/2,1)$, since then $\mathbb E(\X_1)$ exists. Moreover we can choose 
$\tau = 0$ if the real part of any eigenvalue of $E$ exceeds $1$. Due to the spectral decomposition in \cite{meerschaert:2001}, 
centering by truncated expectations in (\ref{vorraussetzung}) is only necessary if some eigenvalue of the exponent $E$ has real 
part equal to $1$.

We already stated, that only points with large norm contribute to the limit of the point processes in (\ref{pp.einleitung}). So 
we use a radial decomposition of the L\'evy measure
\begin{align*}
 \eta(A) = \int_{\mathbb S^{d-1}} \int_0^\infty \mathbf 1_{A}(xv) \widetilde \eta(dx,v) d\sigma(v)
\end{align*}
where $\sigma$ is a probability measure on the unit sphere $\mathbb S^{d-1}$ of $\mathbb R^d$ and 
$(\widetilde \eta(\cdot,v))_{v \in \mathbb S^{d-1}}$ is a weakly measurable family of L\'evy measures on $(0,\infty)$, cf. 
\cite{rosinski:2001}. We also define the right-continuous inverse of $\widetilde \eta(\cdot,v)$ by
\begin{align} \label{rechtsinverse}
 \widetilde \eta^\leftarrow (x,v) := \sup\{u>0 : \widetilde \eta ([u,\infty),v) \geq x\}.
\end{align}
With this notation we are able to give a series representation for the process $\A(\cdot)$ in $D([0,T],\mathbb R^d)$ for 
fixed $T>0$ by 
\begin{align} \label{reihendarst.}
 \lim_{\varepsilon \downarrow 0} \left( \sum_{T\cdot\tau_k\leq t} \left(\widetilde \eta^\leftarrow (T^{-1} \Gamma_k,\V_k)\V_k 
 \mathbf 1_{\eta^\leftarrow (T^{-1} \Gamma_k,\V_k) >\varepsilon}\right) 
 - t \int_{\varepsilon \leq \Vert x \Vert \leq \tau} x d\eta(x)\right),
\end{align}
cf. \cite{lepage:1981}, \cite{rosinski:2001}, where $\Gamma_n$ is the $n$-th partial sum of i.i.d. standard exponential r.v., 
$(\tau_n)_{n \in \mathbb N}$ denotes an i.i.d. sequence of uniformly $\mathcal U(0,1)$-distributed r.v. and 
$(\V_n)_{n \in \mathbb N}$ denotes an i.i.d. sequence with distribution $\sigma$, with $(\Gamma_n)_{n \in \mathbb N}$, 
$(\tau_n)_{n \in \mathbb N}$ and $(\V_n)_{n \in \mathbb N}$ being independent.

Now it is well known that for a triangular array of infinitesimal row-wise independent $\mathbb R^+$-valued r.v. 
$(Y_{k,n})_{1 \leq k \leq n}$, $n \in \mathbb N$, converging to an infinitely 
divisible r.v. $Y$ with associated L\'evy measure $\phi$, only the extremes contribute to the limit distribution, 
cf. \cite{barczyk:2010} and references therein. This result coincides with the convergence
\begin{align} \label{grund.konv.pp}
 \sum_{k=1}^n \varepsilon_{\left(\frac{k}{n},Y_{k,n}\right)} \schwach PRM(\leb \otimes \phi)
\end{align}
in $M_p([0,1] \times (0,\infty])$, the set of all point measures on $[0,1] \times (0,\infty]$, where $PRM(\leb \otimes \phi)$ denotes a Poisson random measure with mean measure 
$\leb \otimes \phi$. Furthermore, it is well known that $\sum_{k \in \mathbb N} \varepsilon_{(\tau_k,\phi^\leftarrow(\Gamma_k))}$ is also 
a representation of $PRM(\leb \otimes \phi)$ in $M_p([0,1] \times (0,\infty])$, where $\phi^\leftarrow$ denotes the 
right-sided inverse of $\phi$, cf. \cite{rosinski:2001}. This fact can be understood by sorting the points on the left-hand 
side in (\ref{grund.konv.pp})
\begin{align} \label{umsortierung}
 \sum_{k=1}^n \varepsilon_{\left(\frac{k}{n},Y_{k,n}\right)}
 = \sum_{k=1}^n \varepsilon_{\left(\frac{d_k}{n},Y_{n-k+1:n}\right)},
\end{align}
where $(Y_{1:n},\ldots,Y_{n:n})$ denotes the order statistics of $(Y_{1,n},\ldots,Y_{n,n})$ with corresponding antirank vector 
$(d_1,\ldots,d_n)$, i.e. the inverse permutation of the rank vector. Using Freedman's Lemma, 
cf. \cite{janssen:1990}, one can easily verify, that the convergence 
$(n^{-1}d_k)_{k \in \mathbb N} \schwach (\tau_k)_{k \in \mathbb N}$ holds in $[0,1]^\mathbb N$, where $d_k=0$ for $k>n$. 
Moreover, $(2.4)$ in \cite{barczyk:2010} gives us $Y_{n-k+1:n} \schwach \phi^\leftarrow(\Gamma_k)$. So the 
convergence in (\ref{grund.konv.pp}) can also be established by analyzing the convergence of the points 
$(n^{-1}d_k,Y_{n-k+1:n})_{1\leq k \leq n}$. This approach can also be applied to the point processes in (\ref{pp.einleitung}). 
As the r.v.\ $(\X_n)_{n \in \mathbb N}$ are $\mathbb R^d$-valued one cannot use traditional order statistics. LePage suggested 
to use a normwise sorting, cf. \cite{lepage:1981}. So for $x_1,\ldots,x_n\in \mathbb R^d$ we introduce the residual 
order statistics $x_{1:n},\ldots,x_{n:n}$ by $\Vert x_{1:n}\Vert \leq \ldots \leq \Vert x_{n:n}\Vert$.

\section{Convergence of residual order statistics}

The convergence of the normalized residual order statistics $A_n \X_{n-k+1:n}$ is still an open problem. LePage 
\cite{lepage:1981} conjectures, that a generalization of the one-dimensional case 
\begin{align} \label{vermutung.lepage}
 (A_n \X_{n-k+1:n})_{k\in \mathbb N} \schwach (\widetilde \eta^\leftarrow(\Gamma_k,\V_k)\V_k)_{k \in \mathbb N}
\end{align}
holds. As usual one sets $\X_{k:n}=0$, whenever $k\leq 0$ or $k>n$. This result has been proven in \cite{lepagezinn:1981} for 
the case that the limit process $\A(\cdot)$ is multivariate $\alpha$-stable. In this case the right-sided inverse 
$\widetilde \eta^\leftarrow(x,v)$ defined in (\ref{rechtsinverse}) is independent of $v \in \mathbb S^{d-1}$ as the projection 
of the L\'evy measure $\eta$ is the same for every direction, cf. Theorem 7.3.3 in \cite{meerschaert:2001}. Some years later 
a similar problem has been studied in \cite{hahn:1989} using a different norm $\Vert \cdot \Vert_H$ which respects the special structure 
of the operator $E$. In \cite{scheffler:1998} the operator semistable case has been studied, but the result (\ref{vermutung.lepage}) 
also could only be established in the special case, that $\widetilde \eta^\leftarrow(x,v)$ is independent of $v$, which concides 
with the multivariate $\alpha$-stable case. The author also supposed that the convergence (\ref{vermutung.lepage}) holds only in this case. 
We will show that the limit on the right-hand side in (\ref{vermutung.lepage}) has to be modified. The proof is based on the 
following lemma. 

\begin{lem} \label{kvgz.pp}
 Let $N_n = \sum_{k=1}^n \varepsilon_{\X_k^{(n)}},~n\in \mathbb N_0$, be a sequence of point processes in 
$M_p([-\infty,\infty]^d \backslash \mathbb K_\varepsilon^d)$, the set of all point measures on the space 
$[-\infty,\infty]^d \backslash \mathbb K_\varepsilon^d$, where 
$\mathbb K_\varepsilon^d := \{x \in \mathbb R^d : \Vert x \Vert \leq \varepsilon \}$ denotes the compact $\varepsilon$-ball 
in $\mathbb R^d$. Suppose $N_n \schwach N_0$. If 
$\varepsilon < \Vert \X_i^{(0)} \Vert<\infty$ holds for every $i \in \mathbb N$ almost surely (a.s.) and 
$\Vert \X_i^{(0)}(\omega)\Vert > \Vert \X_j^{(0)}(\omega)\Vert$ for all $1\leq i<j$ a.s. then the 
convergence 
\begin{align*}
 \left(\X_{n-k+1:n}\right)_{k \in \mathbb N} \schwach \left(\X_k^{(0)}\right)_{k \in \mathbb N}
\end{align*}
holds in $\left(\mathbb R^d \backslash \mathbb K_\varepsilon^d\right)^\mathbb N$.
\end{lem}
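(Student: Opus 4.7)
The plan is to apply a continuous-mapping argument: the map sending a locally finite point configuration (with at least $k$ atoms of pairwise distinct and finite norms strictly exceeding $\varepsilon$) to its $k$ atoms arranged in decreasing order of norm is continuous at $N_0$ under our hypotheses. Since $N_n \schwach N_0$, this continuity should transfer directly to convergence of the residual order statistics, which are by definition the atoms of $N_n$ sorted by norm.

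Concretely, I would first invoke the Skorokhod representation theorem to replace the sequence by a.s.\ convergent versions and restrict attention to the full-measure event on which $\varepsilon < \Vert \X_i^{(0)} \Vert < \infty$ for all $i$ and $\Vert \X_1^{(0)} \Vert > \Vert \X_2^{(0)} \Vert > \cdots$. Fix $k \in \mathbb N$ and pick a radius $r$ with $\Vert \X_{k+1}^{(0)} \Vert < r < \Vert \X_k^{(0)} \Vert$ (automatically $r > \varepsilon$). The shell $B_r := \{x : \Vert x \Vert \geq r\}$ is relatively compact in $[-\infty,\infty]^d \backslash \mathbb K_\varepsilon^d$ and, by strict monotonicity of the norms of $N_0$'s atoms, a continuity set for $N_0$; consequently $N_n(B_r) \to N_0(B_r) = k$, so $N_n(B_r) = k$ for $n$ large. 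The general fact that vague convergence of simple point measures on a relatively compact continuity set forces, after suitable labelling, convergence of the atoms then yields an enumeration $Y_1^{(n)}, \ldots, Y_k^{(n)}$ of the atoms of $N_n$ in $B_r$ with $Y_i^{(n)} \to \X_i^{(0)}$ for each $i \leq k$.

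The second step is to identify these $Y_i^{(n)}$ with the top-$k$ residual order statistics. Since $B_r$ captures precisely the $k$ atoms of $N_n$ of largest norm once $n$ is large enough that $N_n(B_r) = k$, as a set $\{Y_1^{(n)}, \ldots, Y_k^{(n)}\} = \{\X_{n-k+1:n}, \ldots, \X_{n:n}\}$; and the strict ordering $\Vert \X_1^{(0)} \Vert > \cdots > \Vert \X_k^{(0)} \Vert$ combined with the coordinate-wise convergence above forces $\Vert Y_1^{(n)} \Vert > \cdots > \Vert Y_k^{(n)} \Vert$ for large $n$, whence $Y_i^{(n)} = \X_{n-i+1:n}$. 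Since $k$ was arbitrary and the product topology on $(\mathbb R^d \backslash \mathbb K_\varepsilon^d)^{\mathbb N}$ is coordinate-wise convergence, the claim follows.

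I expect the main delicate point to be verifying precisely what is needed about vague convergence on the punctured space $[-\infty,\infty]^d \backslash \mathbb K_\varepsilon^d$: namely that $B_r$ is genuinely relatively compact (which uses both $r > \varepsilon$, to stay bounded away from the removed ball, and the one-point-per-direction compactification that tames the behaviour at infinity), and that vague convergence of simple point measures on a relatively compact continuity set does yield the atom-wise matching invoked above. These are standard facts of point-process theory, but given the non-compact, punctured state space they should be spelled out carefully; everything else is essentially bookkeeping.
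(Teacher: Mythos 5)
Your proposal is correct and follows essentially the same route as the paper: both arguments reduce the claim to the standard fact that vague convergence of point measures, combined with the strictly decreasing, finite norms $>\varepsilon$ of the limit atoms, forces convergence of the norm-ordered atoms (the paper packages this as continuity of the ``$k$-th largest atom'' functional $\pi_k$ via Lemma 7.1 of Resnick and then applies the continuous mapping theorem, while you re-derive the same atom-matching by Skorokhod representation and localization to a shell $B_r$). The differences are purely presentational.
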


\begin{proof}
 The proof is based on a continuous mapping approach and Lemma 7.1 in \cite{resnick:2007}. Define 
$M \subset M_p([-\infty,\infty]^d \backslash \mathbb K_\varepsilon^d)$ by
\begin{eqnarray*}
   M := \Big\{ m : m=\sum_{k=1}^P \varepsilon_{x_k},~ \infty> \Vert x_1\Vert > \ldots > \Vert x_P\Vert>\varepsilon\Big\}.
\end{eqnarray*}
Now we show that the mapping 
\begin{align*}
& \pi_k: M_p([-\infty,\infty]^d \backslash \mathbb K_\varepsilon^d) \rightarrow [-\infty,\infty]^d \backslash \mathbb K_\varepsilon^d \\
& \pi_k \left(\sum_{i=1}^P \varepsilon_{x_i}\right) \mapsto x_{P-k+1:P},~x_{P-k+1:P}=0 \text{ for } k\leq 0 \text{ and } k>P
\end{align*}
is continuous in $m \in M$ for every $k \in \mathbb N$. Let 
$(m_n)_{n \in \mathbb N} \subset M_p([-\infty,\infty]^d \backslash \mathbb K_\varepsilon^d)$ be a sequence of point measures 
converging vaguely to a point measure 
$m_0 = \sum_{k=1}^P \varepsilon_{x_k^{(0)}} \in M_p([-\infty,\infty]^d \backslash \mathbb K_\varepsilon^d)$. Now choose $n$ 
sufficiently large so that all points of $m_n$ lie inside of $[-\infty,\infty]^d \backslash \mathbb K_\varepsilon^d$. 
By sorting the points of $m_n$ in descending order of their norm 
\begin{align*}
 m_n = \sum_{i=1}^P \varepsilon_{x_i^{(n)}},~\varepsilon<\Vert x_P^{(n)}\Vert \leq \ldots \leq \Vert x_1^{(n)}\Vert<\infty
\end{align*}
an application of Lemma 7.1 in \cite{resnick:2007} yields the convergence of the points
\begin{align} \label{kvgz.pkte}
 \left(x_1^{(n)},\ldots ,x_P^{(n)}\right) \longrightarrow \left(x_1^{(0)},\ldots ,x_P^{(0)}\right)
\end{align}
in $(\mathbb R^d\backslash \mathbb K_\varepsilon^d)^P$. Now by the definition of the mapping $\pi_k$ 
\begin{align*}
 & \pi_k(m_n) = x_k^{(n)},~\pi_k(m_0)=x_k^{(0)} \text{ for } 1 \leq k \leq P \\ 
 & \pi_k(m_n) = \pi_k(m_0)=0  \text{ for } k>P
\end{align*}
holds and $\pi_k$ is continuous by (\ref{kvgz.pkte}) for every $k \in \mathbb N$. An easy application of the 
continuous mapping theorem yields the desired result. 
\end{proof}

Lemma \ref{kvgz.pp} allows to identify the distribution of the limit of properly normalized residual order statistics. 

\begin{thm} \label{verm.lepage}
 For $k\in \mathbb N$, $T>0$ and $\omega \in \Omega$ define 
\begin{align} \label{def.res.antirang}
 \widehat d_k(\omega)= \arg \left(\max_{\substack{i\in \mathbb N \\ i\neq \widehat d_1(\omega),\ldots,\widehat d_{k-1}(\omega)}} 
\widetilde \eta^\leftarrow (T^{-1}\Gamma_i(\omega),\V_i(\omega)) \right)
\end{align}
as the argument of the $k$-th largest element of the set 
$\{\eta^\leftarrow (T^{-1}\Gamma_i(\omega),\V_i(\omega)), i \in \mathbb N  \}$. Then convergence of the residual order statistics
\begin{align} \label{resid.order}
 \left(A_n\X_{\lfloor nT\rfloor -k+1:\lfloor nT\rfloor}\right)_{k \in \mathbb N} 
 \schwach \left(\widetilde \eta^\leftarrow(T^{-1}\Gamma_{\widehat d_k},\V_{\widehat d_k})\V_{\widehat d_k} \right)
\end{align}
holds in $(\mathbb R^d)^\mathbb N$.
\end{thm}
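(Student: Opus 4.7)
The plan is to reduce Theorem~\ref{verm.lepage} to Lemma~\ref{kvgz.pp}, applied with $\X_k^{(n)} = A_n \X_k$ for $1 \le k \le \lfloor nT\rfloor$. First I would fix $\varepsilon > 0$ with $\eta(\mathbb{S}^{d-1}_\varepsilon) = 0$ and establish the point process convergence
\[
N_n := \sum_{k=1}^{\lfloor nT\rfloor} \varepsilon_{A_n \X_k} \schwach N_0 \qquad \text{in } M_p([-\infty,\infty]^d \setminus \mathbb{K}_\varepsilon^d),
\]
where $N_0$ is a Poisson random measure with mean $T\eta$ restricted to $\mathbb{R}^d \setminus \mathbb{K}_\varepsilon^d$. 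The second coordinate of \eqref{vorraussetzung} at time $T$ provides the centered triangular array convergence $\sum_k (A_n \X_k - c_{k,n}) \schwach \A(T)$ to an infinitely divisible r.v.\ with L\'evy measure $T\eta$ and no Gaussian part, and the standard equivalence recalled around \eqref{grund.konv.pp} between such triangular array convergence and PRM convergence then yields $N_n \schwach N_0$.

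Next I would identify $N_0$ via its series representation. Using the radial decomposition of $\eta$ together with \eqref{reihendarst.}, the atoms of $N_0$ are precisely
\[
\big\{ \widetilde\eta^\leftarrow(T^{-1}\Gamma_i,\V_i)\V_i : i \in \mathbb{N},\ \widetilde\eta^\leftarrow(T^{-1}\Gamma_i,\V_i) > \varepsilon \big\}.
\]
Since $\Gamma_i \to \infty$ forces $\widetilde\eta^\leftarrow(T^{-1}\Gamma_i,\V_i) \to 0$, this is a.s.\ a finite collection of points, each of finite norm strictly exceeding $\varepsilon$. Sorting them by decreasing norm produces the initial segment of the sequence $(\widetilde\eta^\leftarrow(T^{-1}\Gamma_{\widehat d_k},\V_{\widehat d_k})\V_{\widehat d_k})_{k\in\mathbb{N}}$, by the very definition \eqref{def.res.antirang} of $\widehat d_k$. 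Granted the distinctness of norms addressed below, Lemma~\ref{kvgz.pp} then delivers the convergence of residual order statistics in $(\mathbb{R}^d \setminus \mathbb{K}_\varepsilon^d)^\mathbb{N}$, with both sides set to $0$ beyond the finitely many atoms of $N_0$. To promote this to convergence in $(\mathbb{R}^d)^\mathbb{N}$, I would fix an arbitrary finite set of coordinates $k_1 < \dots < k_m$, and let $\varepsilon \downarrow 0$ along a sequence of continuity values of $\eta$; since $T\eta$ has infinite total mass, $P(\widetilde\eta^\leftarrow(T^{-1}\Gamma_{\widehat d_{k_m}},\V_{\widehat d_{k_m}}) > \varepsilon) \to 1$, so the restriction to $\mathbb{K}_\varepsilon^d$-complement becomes inactive on the chosen coordinates.

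The main obstacle is verifying the second hypothesis of Lemma~\ref{kvgz.pp}: that the atoms of $N_0$ have almost surely pairwise distinct norms, or equivalently that the pushforward of $T\eta$ under $x \mapsto \|x\|$ is nonatomic. In the multivariate $\alpha$-stable case handled in \cite{lepagezinn:1981} this is immediate, because $\widetilde\eta^\leftarrow(\cdot,v)$ is independent of $v$ and so the continuity of the $\Gamma_i$ alone suffices. In the general operator stable setting one must exploit the scaling relation for $\eta$ induced by the exponent $E$, whose eigenvalues have real part greater than $1/2$, to argue that the radial tail $x \mapsto \widetilde\eta([x,\infty),v)$ is continuous and strictly decreasing for $\sigma$-almost every $v$. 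Combined with the independence of the $(\Gamma_i,\V_i)$, this yields a nonatomic joint distribution for the norms $\widetilde\eta^\leftarrow(T^{-1}\Gamma_i,\V_i)$ and thereby the required a.s.\ distinctness, which is precisely what ensures that $\widehat d_k$ in \eqref{def.res.antirang} is well defined for every $k$.
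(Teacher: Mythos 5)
Your proposal follows essentially the same route as the paper: establish $\sum_{k\le\lfloor nT\rfloor}\varepsilon_{A_n\X_k}\schwach PRM(T\cdot\eta)$ from \eqref{vorraussetzung}, restrict to the complement of $\mathbb K_\varepsilon^d$, identify the limit atoms via the Rosi\'nski series representation sorted by norm (which is exactly where $\widehat d_k$ enters), apply Lemma~\ref{kvgz.pp}, and let $\varepsilon\downarrow 0$ with a converging-together argument. The only difference is that you explicitly flag and sketch the verification of the a.s.\ distinct-norms hypothesis of Lemma~\ref{kvgz.pp}, which the paper uses implicitly; this is a welcome addition but not a different method.
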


\begin{rem} \label{remark.existenz}
Note that $\widehat d_k$ is well-defined for all $k \in \mathbb N$, since the number of elements in the set
$\{ i\in \mathbb N: \widetilde \eta^\leftarrow (T^{-1}\Gamma_i,\V_i)>\varepsilon \}$ is finite a.s. for all 
$\varepsilon >0$. Moreover theorem \ref{verm.lepage} does not contradict any of the results proven in \cite{hahn:1989}, 
\cite{lepagezinn:1981}, \cite{scheffler:1998}. If $\A(\cdot)$ is a multivariate $\alpha$-stable L\'evy process, the monotonicity of the mapping 
$x \mapsto \widetilde \eta^\leftarrow(x,v)$ yields $\widehat d_k(\omega) = k$ a.s. for all $k \in \mathbb N$.
\end{rem}

\begin{proof}[Proof of Theorem \ref{verm.lepage}]
 First we have to determine the limit of the truncated point process
\begin{align*}
 \sum_{k=1}^{\lfloor nT \rfloor} \varepsilon_{(A_n\X_k \mathbf 1_{\Vert A_n\X_k\Vert \geq \varepsilon})},
\end{align*}
where $\varepsilon>0$ has to be choosen such that $\eta(\mathbb S^{d-1}_\varepsilon)=0$ holds. 
By Theorem 3.2.2 in \cite{meerschaert:2001} assumption (\ref{vorraussetzung}) yields the vage convergence 
\begin{align} \label{konv.levymaß}
 \lfloor nT\rfloor \mathbb P\left(A_n\X_k \in \cdot \right) \vage T \cdot \eta(\cdot)
\end{align}
in $\mathbb R^d \backslash \{0\}$. Hence the convergence of the point processes 
\begin{align*} 
 \sum_{k=1}^{\lfloor nT\rfloor} \varepsilon_{A_n\X_k} \schwach PRM(T\cdot\eta)
\end{align*}
holds in $M_p([-\infty,\infty]^d \backslash\{0\})$. Now by \cite{rosinski:2001}
$\sum_{k\in \mathbb N} \varepsilon_{\widetilde \eta^\leftarrow(T^{-1}\Gamma_k,\V_k)\V_k} = PRM(T\cdot\eta)$.
Applying the a.s. continuous restriction functional
\begin{align} \label{einschr.abb}
 \pi':M_p([-\infty,\infty]^d\backslash \{0\})\rightarrow M_p([-\infty,\infty]^d\backslash \mathbb K_\varepsilon^d) 
,~m \mapsto m_{|(\mathbb K^d_\varepsilon)^\complement},
\end{align}
the continuous mapping theorem yields
\begin{align} \label{kvgz.eigeschr.pp}
 \sum_{k=1}^{\lfloor nT\rfloor} \varepsilon_{A_n \X_k \mathbf 1_{\Vert A_n \X_k \Vert \geq \varepsilon}}
 \schwach \sum_{k \in \mathbb N} \varepsilon_{\widetilde \eta^\leftarrow(T^{-1}\Gamma_k,\V_k)\V_k 
  \mathbf 1_{\widetilde \eta^\leftarrow(T^{-1}\Gamma_k,\V_k) \geq \varepsilon}}.
\end{align}
The continuity of $\pi'$ is proven in \cite{feigin:1996} for instance. Now the points of the point process on the right-hand side 
of (\ref{kvgz.eigeschr.pp}) have to be ordered in descending order of their norm 
\begin{align*}
\sum_{k \in \mathbb N} \varepsilon_{\widetilde \eta^\leftarrow(T^{-1}\Gamma_k,\V_k)\V_k 
  \mathbf 1_{\widetilde \eta^\leftarrow(T^{-1}\Gamma_k,\V_k) \geq \varepsilon}}
= \sum_{k \in \mathbb N} \varepsilon_{\widetilde \eta^\leftarrow(T^{-1}\Gamma_{\widehat d_k},\V_{\widehat d_k})\V_{\widehat d_k} 
  \mathbf 1_{\widetilde \eta^\leftarrow\left(T^{-1}\Gamma_{\widehat d_k},\V_{\widehat d_k}\right) \geq \varepsilon}}.
\end{align*}
An application of Lemma \ref{kvgz.pp} yields the convergence of the points
\begin{align*}
& \left( A_n \X_{\lfloor nT\rfloor -k+1:\lfloor nT\rfloor} 
\mathbf 1_{\Vert A_n \X_{\lfloor nT\rfloor -k+1:\lfloor nT\rfloor} \Vert \geq \varepsilon}\right)_{k \in \mathbb N}\\
& \quad\schwach \left( \widetilde \eta^\leftarrow(T^{-1}\Gamma_{\widehat d_k},\V_{\widehat d_k})\V_{\widehat d_k} 
  \mathbf 1_{\widetilde \eta^\leftarrow(T^{-1}\Gamma_{\widehat d_k},\V_{\widehat d_k}) \geq \varepsilon}\right)_{k \in \mathbb N}
\end{align*}  
in $(\mathbb R^d \backslash \mathbb K_\varepsilon^d)^\mathbb N$. The desired result follows by taking the limit as 
$\varepsilon \downarrow 0$ and an easy application of Theorem 4.2 in \cite{billingsley:1999}.
\end{proof}

\section{Convergence of associated point processes}

Now that the limit distribution of normalized residual order statistics is identified we can study the associated marked 
point processes 
\begin{align*}
  \sum_{k=1}^{\lfloor nT \rfloor} \varepsilon_{(b_n^{-1} T_k, A_n \X_k)},~
 \sum_{k=1}^{\lfloor nT \rfloor} \varepsilon_{(b_n^{-1} T_{k-1}, A_n \X_k)}.
\end{align*}
In the uncoupled case convergence results for this processes can be established with a continuous mapping approach using the 
time deformation defined in \cite[(8.29)]{resnick:2007}. Since the continuity of this time deformation demands the processes 
$\A(\cdot)$ and $D(\cdot)$ to have a.s. no common jumps, which is not necessarily fulfilled in the coupled case, 
this standard methods connot be applied in our case. So we use a sorting argument like in (\ref{umsortierung}).

\begin{lem} \label{konvergenzlemma.pp}
Let $(\tau_n)_{n \in \mathbb N}$, $(\Gamma_n)_{n \in \mathbb N}$ and $(\V_n)_{n \in \mathbb N}$ be as in \eqref{reihendarst.}. Then the convergence of the associated point processes 
\begin{align}
& \sum_{k=1}^{\lfloor nT \rfloor} \varepsilon_{(b_n^{-1} T_k, A_n \X_k )}
  \schwach \sum_{k\in \mathbb N} 
  \varepsilon_{\left(D(T\cdot \tau_k),\widetilde \eta^\leftarrow(T^{-1}\Gamma_k,\V_k)\V_k
\right)} \label{kvgz.pp.1}\\
& \sum_{k=1}^{\lfloor nT \rfloor} \varepsilon_{(b_n^{-1} T_{k-1}, A_n \X_k )}
  \schwach \sum_{k\in \mathbb N} 
  \varepsilon_{\left(D(T\cdot \tau_k-),\widetilde \eta^\leftarrow(T^{-1}\Gamma_k,\V_k)\V_k
\right)} \label{kvgz.pp.2}
\end{align}
holds in $M_p([0,\infty) \times [-\infty,\infty]^d \backslash \{0\})$ for every $T>0$, where $D(x-)$ denotes the left-hand 
limit of the process $D(\cdot)$ in $x$.
\end{lem}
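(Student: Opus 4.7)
The plan is to follow the sorting strategy used in (\ref{umsortierung}). Let $(d_1^{(n)},\ldots,d_{\lfloor nT\rfloor}^{(n)})$ be the antirank vector associated with $(\Vert A_n\X_k\Vert)_{1\le k\le \lfloor nT\rfloor}$. Viewed as point measures,
\begin{align*}
 \sum_{k=1}^{\lfloor nT\rfloor}\varepsilon_{(b_n^{-1}T_k,A_n\X_k)}
 = \sum_{k=1}^{\lfloor nT\rfloor}\varepsilon_{(b_n^{-1}T_{d_k^{(n)}},\,A_n\X_{\lfloor nT\rfloor-k+1:\lfloor nT\rfloor})},
\end{align*}
and analogously with $T_{d_k^{(n)}-1}$ for the forward-coupled version. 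Since the limiting point measure on the right-hand side of (\ref{kvgz.pp.1}) is invariant under reindexing, it coincides with $\sum_{k\in\mathbb N}\varepsilon_{(D(T\tau_{\widehat d_k}),\,\widetilde\eta^\leftarrow(T^{-1}\Gamma_{\widehat d_k},\V_{\widehat d_k})\V_{\widehat d_k})}$, with the analogous reformulation for (\ref{kvgz.pp.2}). Theorem \ref{verm.lepage} already identifies the limit of the second coordinates; hence the task reduces to showing joint convergence of the first coordinate sequences $(b_n^{-1}T_{d_k^{(n)}})_{k\in\mathbb N}$ and $(b_n^{-1}T_{d_k^{(n)}-1})_{k\in\mathbb N}$ to $(D(T\tau_{\widehat d_k}))_{k\in\mathbb N}$ and $(D(T\tau_{\widehat d_k}-))_{k\in\mathbb N}$, respectively.

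To obtain this I would pass to an almost sure realization via Skorokhod's representation theorem and simultaneously control three ingredients. First, from (\ref{vorraussetzung}) we have the functional $J_1$-convergence $b_n^{-1}T_{\lfloor n\,\cdot\rfloor}\to D(\cdot)$ in $D([0,\infty),\mathbb R^+)$. Second, Theorem \ref{verm.lepage} gives the joint convergence of the sorted marks. Third, Freedman's lemma (already invoked in the discussion of (\ref{umsortierung})) yields $(n^{-1}d_k^{(n)})_{k\in\mathbb N}\schwach(T\tau_{\widehat d_k})_{k\in\mathbb N}$ in $[0,T]^{\mathbb N}$ jointly with the previous convergence, the crucial point being that the random labels $\tau_{\widehat d_k}$ produced by this limit are precisely the same uniforms already attached to the sorted marks, so the coupling across the three convergences is consistent.

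Combining these on the common almost sure space, I would then read off $b_n^{-1}T_{d_k^{(n)}}\to D(T\tau_{\widehat d_k})$ and $b_n^{-1}T_{d_k^{(n)}-1}\to D(T\tau_{\widehat d_k}-)$ by applying the standard fact on $J_1$-convergence: if $f_n\to f$ in $J_1$ and $x_n\to x$, then $f_n(x_n)\to f(x)$ if $x_n\ge x$ eventually and $f_n(x_n)\to f(x-)$ if $x_n<x$ strictly. The piecewise-constant step function $b_n^{-1}T_{\lfloor n\,\cdot\rfloor}$ evaluated at $d_k^{(n)}/n$ equals $b_n^{-1}T_{d_k^{(n)}}$ by right continuity, which together with $d_k^{(n)}/n\to T\tau_{\widehat d_k}$ yields the first limit; using $(d_k^{(n)}-1)/n$ approaches $T\tau_{\widehat d_k}$ strictly from the left, yielding the second limit.

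The main obstacle is the delicate behavior precisely at times where $D$ has a jump. In the coupled setting, the large $X$-jump at position $d_k^{(n)}$ is typically accompanied by a large waiting time $J_{d_k^{(n)}}$, so in the limit $D$ has a jump at $T\tau_{\widehat d_k}$ whose size is the limit of $b_n^{-1}J_{d_k^{(n)}}$; this is exactly the mechanism distinguishing forward- and backward-coupling. I would argue that, among all jumps of $D(\cdot)$ occurring before time $T$, the jump at $T\tau_{\widehat d_k}$ can only arise from the $d_k^{(n)}$-th summand (the uniforms $\tau_j$ being a.s.\ distinct), so that $D(T\tau_{\widehat d_k})-D(T\tau_{\widehat d_k}-)$ matches $\lim_n b_n^{-1}(T_{d_k^{(n)}}-T_{d_k^{(n)}-1})$ and the two one-sided limits of $b_n^{-1}T_{d_k^{(n)}-\bullet}$ separate correctly. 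Finally, as in the proof of Theorem \ref{verm.lepage}, I would first truncate at a level $\varepsilon>0$ with $\eta(\mathbb S^{d-1}_\varepsilon)=0$ so that only finitely many terms contribute, apply the continuous restriction map to transfer the convergence in (\ref{kvgz.eigeschr.pp}) to $M_p([0,\infty)\times([-\infty,\infty]^d\setminus\mathbb K_\varepsilon^d))$, and then let $\varepsilon\downarrow 0$ invoking Theorem~4.2 in \cite{billingsley:1999} to extend to $M_p([0,\infty)\times [-\infty,\infty]^d\setminus\{0\})$.
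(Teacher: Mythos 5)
Your overall architecture (sort by residual order statistics, identify the limits of the two coordinates jointly, truncate at level $\varepsilon$ and let $\varepsilon\downarrow 0$) matches the paper's, but the step that carries all of the difficulty is resolved by an argument that does not work. The ``standard fact'' you invoke --- that $f_n\to f$ in ${\rm J}_1$ and $x_n\to x$ with $x_n\ge x$ eventually implies $f_n(x_n)\to f(x)$, while $x_n<x$ implies $f_n(x_n)\to f(x-)$ --- is false when $f$ jumps at $x$: take $f_n=\mathbf 1_{[1+1/n,\infty)}$, $f=\mathbf 1_{[1,\infty)}$ and $x_n=1+1/(2n)\ge 1$; then $f_n(x_n)=0\neq 1=f(1)$. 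What determines the limit of $f_n(x_n)$ at a discontinuity is the position of $x_n$ relative to the location of the approximating jump of $f_n$ (i.e.\ relative to $\lambda_n(x)$ for the ${\rm J}_1$ time changes), not relative to $x$. The paper flags exactly this obstruction: the evaluation map $(x,t)\mapsto x(t)$ is a.s.\ continuous only when $x$ is a.s.\ continuous at $t$, and in the coupled case $D(\cdot)$ jumps precisely at the evaluation point $T\tau_{\widehat d_k}$ with positive probability, because an index realizing a large $\Vert \X\Vert$ can coincide with an index realizing a large $J$. Your final paragraph correctly identifies this as the crux, but the assertion that ``the two one-sided limits separate correctly'' is exactly what has to be proved; as written, nothing in your argument distinguishes the limit of $b_n^{-1}T_{d_k^{(n)}}$ from that of $b_n^{-1}T_{d_k^{(n)}-1}$, since $d_k^{(n)}/n$ and $(d_k^{(n)}-1)/n$ converge to the same point, and you cannot even guarantee that $(d_k^{(n)}-1)/n$ stays strictly below $T\tau_{\widehat d_k}$.

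The paper's resolution is combinatorial rather than topological, and this is the ingredient your proposal is missing. Writing $T_{d_{\lfloor nT\rfloor-i+1}}=\sum_{l} J_{\lfloor nT\rfloor-l+1:\lfloor nT\rfloor}\,\mathbf 1_{\widehat d_{\lfloor nT\rfloor-l+1}\le d_{\lfloor nT\rfloor-i+1}}$ in terms of the order statistics of the waiting times and the comparison of the two antirank vectors turns the question ``is the accompanying big waiting time included?'' into the exact index comparison $\widehat d\le d$ versus $\widehat d\le d-1$. Basu's lemma supplies the independence of the antiranks from the order statistics of the $J$'s needed to pass to the joint limit of all ingredients, and the resulting series $\sum_{l}(\eta^{\alpha})^{-1}(T^{-1}\widehat\Gamma_l)\mathbf 1_{\widehat\tau_l\le\widetilde\tau_i}$ and $\sum_{l}(\eta^{\alpha})^{-1}(T^{-1}\widehat\Gamma_l)\mathbf 1_{\widehat\tau_l<\widetilde\tau_i}$ are identified as the Ferguson--Klass representations of $D(T\widetilde\tau_i)$ and $D(T\widetilde\tau_i-)$ respectively. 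Without a step equivalent to this, your proof does not close.
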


\begin{proof}
 Choose $T>0$ arbitrary. We start by sorting the points of the associated point process
\begin{align*}
 \left(\sum_{k=1}^{\lfloor nT\rfloor} \varepsilon_{\left(b_n^{-1}T_k,A_n \X_k \right)} \right)
 = \left(\sum_{k=1}^{\lfloor nT\rfloor} 
\varepsilon_{\left(b_n^{-1}T_{d_{\lfloor nT\rfloor -k+1}},A_n \X_{\lfloor nT\rfloor -k+1:\lfloor nT\rfloor} \right)} \right).
\end{align*}
Again $(d_1,\ldots,d_{\lfloor nT\rfloor})$ denotes the antirank vector of the r.v. $(\X_1,\ldots,\X_{\lfloor nT\rfloor})$. 
The normalized 
residual order statistics $A_n \X_{n-k+1:\lfloor nT\rfloor}$ have already been studied in Theorem \ref{verm.lepage}. It 
remains to determine the limit distribution of 
\begin{align*}
b_n^{-1}T_{d_{\lfloor nT\rfloor -k+1}} = b_n^{-1}\sum_{l=1}^{d_{\lfloor nT\rfloor -k+1}} J_l
= b_n^{-1} \sum_{l=1}^{\left\lfloor \lfloor nT\rfloor \cdot \frac{d_{\lfloor nT\rfloor -k+1}}{\lfloor nT\rfloor}\right\rfloor} J_l.
\end{align*}
Since $\X_1,\ldots,\X_{\lfloor nT\rfloor}$ are i.i.d., $(n^{-1}d_k)_{k \in \mathbb N} \schwach (T\cdot\tau_k)_{k \in \mathbb N}$ 
suggests that the convergence 
\begin{align} \label{konv.zuf.summe}
(b_n^{-1}T_{d_{\lfloor nT\rfloor -k+1}})_{k \in \mathbb N} \schwach (D(T\cdot\tau_k))_{k \in \mathbb N}
\end{align}
holds in $[0,\infty)^\mathbb N$. But this result cannot be established with a traditional continuous mapping approach. The 
mapping $\pi_t:D([0,\infty),\mathbb R^d)\times \mathbb R^+ \rightarrow \mathbb R,~(x,t)\mapsto x(t)$ is only a.s. continuous 
if $x$ is a.s. continuous in $t$. Also classical transfer theorems, cf. \cite{finkelstein:1989}, \cite{gnedenko:1968}, are not 
helpful because they require independence of the summands and their quantity or a stochastic convergence of the 
normalized antirank vector, cf. \cite{kern:2004}. Since none of these conditions is fulfilled another approach is used. 

Let $(\widehat d_1,\ldots, \widehat d_{\lfloor nT\rfloor})$ denote the associated antirank vector of the waiting times 
$(J_1,\ldots,J_{\lfloor nT\rfloor})$. Since the joint convergence of the well-centered and normalized sequential partial 
sums to the process $(D(\cdot),\A(\cdot))$ holds, one can easily prove convergence of the normalized antirank vector 
\begin{align*}
(n^{-1}(d_{\lfloor nT\rfloor -k+1})_{k \in \mathbb N}, n^{-1}(\widehat d_{\lfloor nT\rfloor -k+1})_{k \in \mathbb N}) 
\schwach ((T\cdot\tau_i)_{i\in\mathbb N},(T\cdot \widehat \tau_i)_{i \in \mathbb N})
\end{align*}
where $(\widetilde \tau_n)_{n \in \mathbb N}$, $(\widehat \tau_n)_{n \in \mathbb N}$ denote two i.i.d. sequence of $\mathcal U(0,1)$-distributed r.v.. 
Note that the sequences $(\widetilde\tau_n)_{n \in \mathbb N}$ and $(\widehat \tau_n)_{n \in \mathbb N}$ are not independent in the 
coupled case. As a consequence of the convergence of the antirank vector, the convergence of indicator functions 
\begin{align*}
 \left(\mathbf 1_{n^{-1} \widehat d_{\lfloor nT\rfloor -j+1} \leq n^{-1} d_{\lfloor nT\rfloor -i+1}}\right)_{j \in \mathbb N} 
\schwach \left(\mathbf 1_{\widehat \tau_j \leq\widetilde\tau_i}\right)_{j \in \mathbb N}
\end{align*}
holds. An application of Basu's lemma, cf. \cite[Theorem 5.1.2]{lehmann:2005}, yields the independence 
\begin{align} \label{anwendung.basu}
 \left(\left(d_1,\ldots d_{\lfloor nT \rfloor}\right), \left(\widehat d_1,\ldots \widehat d_{\lfloor nT \rfloor}\right)\right)
\perp \left(J_{1:\lfloor nT \rfloor} , \ldots ,J_{\lfloor nT \rfloor:\lfloor nT \rfloor}\right)
\end{align}
for every fixed $n \in \mathbb N$, which proves that the joint convergence 
\begin{align*}
 \left(b_n^{-1} J_{\lfloor nT\rfloor -j+1:\lfloor nT\rfloor} \mathbf 1_{\widehat d_{\lfloor nT\rfloor -j+1} 
\leq d_{\lfloor nT\rfloor -i+1}}\right)_{j \in \mathbb N}
\schwach \left((\eta^{\alpha})^{-1}(T^{-1}\widehat \Gamma_j)\mathbf 1_{\widehat \tau_j \leq\widetilde\tau_i}\right)_{j \in \mathbb N}
\end{align*}
holds, where $(\eta^\alpha)^{-1}$ is the right-sided inverse of the L\'evy measure associated with $D(1)$ and 
$(\widehat \Gamma_n)_{n \in \mathbb N}$ denotes a distributional copy of the sequence $(\Gamma_n)_{n \in \mathbb N}$. 
Note that the sequences $(\Gamma_n)_{n \in \mathbb N}$ and $(\widehat \Gamma_n)_{n \in \mathbb N}$ are also not independent 
in the coupled case. Now summation verifies
\begin{align}
& \sum_{l\in \mathbb N} b_n^{-1} J_{\lfloor nT\rfloor -l+1:\lfloor nT\rfloor} 
\mathbf 1_{n^{-1}\widehat d_{\lfloor nT\rfloor -l+1} \leq n^{-1} d_{\lfloor nT\rfloor -i+1}} \\
&  = \sum_{l=1}^{\lfloor nT \rfloor} b_n^{-1} J_{\lfloor nT\rfloor -l+1:\lfloor nT\rfloor} 
\mathbf 1_{n^{-1}\widehat d_{\lfloor nT\rfloor -l+1} \leq n^{-1} d_{\lfloor nT\rfloor -i+1}}  \nonumber \\
&  = b_n^{-1} \sum_{l=1}^{\lfloor nT \rfloor} J_l \mathbf 1_{l \leq d_{\lfloor nT\rfloor -i+1} }
  = b_n^{-1} \sum_{l=1}^{d_{\lfloor nT\rfloor -i+1}} J_l  
\schwach \sum_{l\in \mathbb N} (\eta^{\alpha})^{-1}(T^{-1}\widehat \Gamma_l) 
  \mathbf 1_{T\cdot \widehat \tau_l\leq T\cdot\widetilde\tau_i}. \label{fkd.proz.d}
\end{align}
Using the Ferguson-Klass series representation of the process $D(\cdot)$, cf. \cite{feguson:1972}, one identifies the 
right-hand side in (\ref{fkd.proz.d}) as series representation of $D(T\cdot\widetilde\tau_i)$. Since the convergence of antiranks 
holds simultaneously, we have proven
\begin{align} \label{nachweis.kvgz.pp.1}
  \left(b_n^{-1}\sum_{l=1}^{\lfloor n \cdot \frac{d_{\lfloor nT\rfloor -i+1}}{n}\rfloor}J_l \right)_{i \in \mathbb N} 
\schwach \left(D(T \cdot {\widetilde\tau_i}) \right)_{i \in \mathbb N}
\end{align}
in $(\mathbb R^+)^\mathbb N$. Again by the joint convergence of the properly normalized and scaled sequential partial sums 
to the process $(D(\cdot),\A(\cdot))$, the independence (\ref{anwendung.basu}) and Theorem \ref{verm.lepage}, 
convergence of the points 
\begin{align*}
& \left(b_n^{-1} T_{d_{\lfloor nT \rfloor -i + 1}},A_n 
\X_{\lfloor nT \rfloor -i +1 :\lfloor nT \rfloor}\right)_{i \in \mathbb N} 
\schwach \left(D(T\cdot {\widetilde\tau_i}),
\widetilde \eta^\leftarrow(T^{-1}\Gamma_{\widehat d_i},\V_{\widehat d_i})\V_{\widehat d_i} \right)_{i \in \mathbb N}
\end{align*} 
holds. Since the mapping $x\mapsto \varepsilon_x$ is continuous, the convergence of points yields the convergence of 
point processes
\begin{align*}
\left(\varepsilon_{\left(b_n^{-1} T_{d_{\lfloor nT \rfloor -i + 1}},A_n 
\X_{\lfloor nT \rfloor -i +1 :\lfloor nT \rfloor}\right)}\right)_{i \in \mathbb N}
\schwach \left(\varepsilon_{\left(D(T\cdot {\widetilde\tau_i}),
\widetilde \eta^\leftarrow(T^{-1}\widetilde\Gamma_{\widehat d_i},\V_{\widehat d_i})\V_{\widehat d_i}\right)}\right)
_{i \in \mathbb N}
\end{align*}
in $M_p([0,\infty) \times [-\infty,\infty]^d\backslash \{0\})$. Now summation and an easy application of theorem 4.2 in 
\cite{billingsley:1999} yields 
\begin{align*}
 \sum_{k=1}^{\lfloor nT\rfloor} \varepsilon_{\left(b_n^{-1} T_{d_{\lfloor nT \rfloor -k + 1}},A_n 
\X_{\lfloor nT \rfloor -k +1 :\lfloor nT \rfloor}\right)}
\schwach \sum_{k\in\mathbb N} 
\varepsilon_{\left(D(T\cdot {\widetilde\tau_k}),
\widetilde \eta^\leftarrow(T^{-1}\widetilde\Gamma_{\widehat d_k},\V_{\widehat d_k})\V_{\widehat d_k}\right)}
\end{align*}
in $M_p([0,\infty) \times [-\infty,\infty]^d\backslash \{0\})$. Now we need to reverse the order of the points again. 
We introduce
\begin{align*}
 r_k(\omega) := 1+ \# \{ i \in \mathbb N: \widetilde \eta^\leftarrow (T^{-1} \Gamma_i(\omega),\V_i(\omega)) > 
 \widetilde \eta^\leftarrow (T^{-1} \Gamma_k(\omega),\V_k(\omega)) \}
\end{align*}
as inverse of $\widehat d_k$. Note that $r_k$ is well-defined for all $k \in \mathbb N$ by Remark \ref{remark.existenz}. Moreover, 
since $(\widetilde\tau_n)_{n \in \mathbb N}$ and $(r_n)_{n \in \mathbb N}$ are independent, an easy application of the desintegration 
formula shows, that $(\widetilde\tau_{r_n})_{n \in \mathbb N}$ is also i.i.d.\ and $\mathcal U(0,1)$ distributed. Since the convergence in \eqref{grund.konv.pp} towards \eqref{reihendarst.} can be proven 
with the same technique, the 
sequences $(\widetilde\tau_{r_n})_{n \in \mathbb N}$, $(\Gamma_n)_{n \in \mathbb N}$ and $(\V_n)_{n \in \mathbb N}$ are also independent. 
So we define $\tau_n:=\widetilde\tau_{r_n}$ for all $n \in \mathbb N$ and obtain
\begin{align*}
\sum_{k=1}^{\lfloor nT\rfloor} \varepsilon_{\left(b_n^{-1} T_k,A_n \X_k \right)}
&  = \sum_{k=1}^{\lfloor nT\rfloor} \varepsilon_{\left(b_n^{-1} 
  T_{d_{\lfloor nT \rfloor -k+1:\lfloor nT \rfloor}},A_n \X_{\lfloor nT \rfloor -k+1:\lfloor nT \rfloor} \right)}\\
& \schwach 
\sum_{k \in \mathbb N} 
\varepsilon_{\left(D(T\cdot \tau_k), \widetilde\eta^\leftarrow(T^{-1}\Gamma_k,\V_k)\V_k\right)}.
\end{align*}
Hence we have proven (\ref{kvgz.pp.1}). In order to prove (\ref{kvgz.pp.2}) the limit distribution of 
\begin{align*}
b_n^{-1} T_{d_{\lfloor nT\rfloor -i+1}-1} = b_n^{-1} \sum_{k=1}^{d_{\lfloor nT\rfloor -i+1}-1} J_k 
 = b_n^{-1}\sum_{k=1}^{\left\lceil n \cdot \frac{d_{\lfloor nT\rfloor -i+1}}{n}-1\right\rceil} J_k
\end{align*}
has to be analized. Denoting $G([0,\infty),\mathbb R^d)$ the space of all left-continuous functions with right-hand limits from 
$[0,\infty)$ to $\mathbb R^d$ one easily proves that the mapping $\mathcal T:D[0,\infty)\rightarrow G[0,\infty),~x(t) 
\mapsto x(t-)$ is Lipschitz-continuous with Lipschitz-constant one and hence continuous. Since 
\begin{align*}
\mathcal T\left( \sum_{k=1}^{\lfloor nt\rfloor} J_k \right) = \sum_{k=1}^{\lceil nt\rceil -1} J_k
\end{align*}
holds, the continuity of $\mathcal T$ suggests, that the convergence
\begin{align} \label{nachweis.kvgz.pp.2}
 \left(b_n^{-1} \sum_{k=1}^{\big \lceil n \frac{d_{\lfloor nT\rfloor -i+1}}{n} -1 \big \rceil} J_k \right)_{i \in \mathbb N} 
 \schwach \left(D(T\cdot {\widetilde\tau_i-}) \right)_{i \in \mathbb N}
\end{align}
holds. But since (\ref{nachweis.kvgz.pp.1}) could not be proven with the continuous mapping theorem, we use the above 
arguments again to obtain (\ref{nachweis.kvgz.pp.2}). We note that 
\begin{align}
& b_n^{-1}\sum_{l=1}^{d_{\lfloor nT\rfloor -i+1}-1} J_l = \sum_{l=1}^{\lfloor nT\rfloor} b_n^{-1} 
J_{\lfloor nT \rfloor -l+1:\lfloor nt \rfloor} 
  \mathbf 1_{\widehat d_{\lfloor nT \rfloor -l+1} \leq d_{\lfloor nT\rfloor -i+1}-1}  \nonumber \\
& = \sum_{l=1}^{\lfloor nT\rfloor} b_n^{-1} J_{\lfloor nT \rfloor -l+1:\lfloor nt \rfloor} 
  \mathbf 1_{n^{-1}\widehat d_{\lfloor nT \rfloor -l+1} < n^{-1} d_{\lfloor nT\rfloor -i+1}} 
 \schwach \sum_{l\in \mathbb N} \widehat \eta^{-1}(T^{-1}\widehat \Gamma_l) 
\mathbf 1_{T\cdot \widehat\tau_l<T\cdot\widetilde\tau_i} \label{fkd.d-}
\end{align}
holds. Again we identify the limit on the right-hand side in (\ref{fkd.d-}) as a series representation of $D(T\cdot\widetilde\tau_i-)$. As 
already stated this yields the desired result (\ref{kvgz.pp.2}), which completes the proof.
\end{proof}

\section{Scaling limits of coupled CTRWs}

In this section we are now able to identify the scaling limits of coupled CTRWs using the limit theorems for their associated 
point processes stated in Lemma \ref{konvergenzlemma.pp}. We need to introduce the set 
\begin{align*}
 \mathcal S := \{ T\in \mathbb R^+ : \mathbb P(D(T\cdot \tau_i)=T) =0 \text{ for all } i \in \mathbb N \}
\end{align*}
for technical reasons. Due to selfsimilarity, one can easily show that the equality 
\begin{align*}
 D(xt\cdot \tau_i) \stackrel{\mathcal D}{=} x^{1/\alpha} D(t\cdot\tau_i)
\end{align*}
holds for all $i \in \mathbb N$ and $x,t \in \mathbb R^+$. Hence the set $\mathcal S$ is dense in $\mathbb R^+$. 

\begin{thm} \label{theorem.kvgz.ctrws}
Let $E(t):=\inf\{{ x>0}:D(x)>t\}$ denote the hitting-time process associated with $D(\cdot)$. Then convergence of the 
backward- and forward-coupled CTRW 
\begin{align}
&\sum_{k=1}^{N_{tb_n}} (A_n \X_k - \mathbb E(A_n \X_k \mathbf 1_{\Vert A_n \X_k \Vert \leq \tau})) \nonumber \\
& \schwach \lim_{\varepsilon \downarrow 0}  \left(\sum_{D(T\cdot {\tau_k})\leq t} 
\left( \widetilde \eta^\leftarrow (T^{-1} 
\Gamma_k,\V_k)\V_k 
\mathbf 1_{ \widetilde \eta^\leftarrow (T^{-1} \Gamma_k,\V_k) \geq \varepsilon} \right) 
- E(t) \int_{\varepsilon \leq \Vert x \Vert \leq \tau}x ~d\eta(x)\right) \label{kvgz.1}
\end{align}
and
\begin{align}
&\sum_{k=1}^{N_{tb_n}+1} (A_n \X_k - \mathbb E(A_n \X_k \mathbf 1_{\Vert A_n \X_k \Vert \leq \tau})) \nonumber \\
& \schwach \lim_{\varepsilon \downarrow 0} \left( \sum_{D(T\cdot {\tau_k -})\leq t} \hspace*{-0.2cm}
\left( \widetilde \eta^\leftarrow (T^{-1} 
\Gamma_k,\V_k)\V_k  
\mathbf 1_{\widetilde \eta^\leftarrow (T^{-1} \Gamma_k,\V_k) \geq \varepsilon}\right) 
- E(t) \int_{\varepsilon \leq \Vert x \Vert \leq \tau}x ~d\eta(x) \right) \label{kvgz.2}
\end{align}
holds in $D([0,T],\mathbb R^d)$ for every $T \in \mathcal S$ and every $\tau >0$ such that $\eta(\mathbb S^{d-1}_\tau) =0$.
\end{thm}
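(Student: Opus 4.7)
The plan is to deduce Theorem \ref{theorem.kvgz.ctrws} from the point process convergence in Lemma \ref{konvergenzlemma.pp} via a continuous mapping argument, by summing the marks of all points with time coordinate at most $t$ and then letting the truncation level tend to zero. Fix $T \in \mathcal{S}$ and $\varepsilon>0$ with $\eta(\mathbb{S}^{d-1}_\varepsilon)=0$, and restrict both pre-limit and limiting point processes to the set $[0,\infty) \times \{x \in \mathbb{R}^d : \Vert x\Vert \geq \varepsilon\}$ using the a.s.\ continuous restriction functional from (\ref{einschr.abb}). Because $\eta(\{\Vert x\Vert \geq\varepsilon\})<\infty$, the restricted limit point processes possess only finitely many atoms with time coordinate in $[0,T]$ almost surely.

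Next I would apply the path-valued functional
\begin{align*}
\Psi^\varepsilon : m \mapsto \Bigl( t \mapsto \int_{[0,t]\times\{\Vert x\Vert\geq\varepsilon\}} x\, m(ds,dx) \Bigr)_{t\in[0,T]}
\end{align*}
taking values in $D([0,T],\mathbb{R}^d)$. This functional is a.s.\ continuous at the limit measures of Lemma \ref{konvergenzlemma.pp} on the event that (a) no two atoms of the limit in $[0,T]\times\{\Vert x\Vert\geq\varepsilon\}$ share a time coordinate, and (b) none of these time coordinates equals $T$. Condition (a) follows from the independence of $(\tau_k)$ and $(\widehat d_k)$ from $D(\cdot)$ together with the strict increase of $D(\cdot)$ at the finitely many relevant indices, while condition (b) is precisely the defining property of $\mathcal{S}$. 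Applying $\Psi^\varepsilon$ to the pre-limit processes in (\ref{kvgz.pp.1}) and (\ref{kvgz.pp.2}) then yields the truncated backward- and forward-coupled partial sums, up to a remainder that is negligible since $n^{-1}N_{tb_n}\schwach E(t)\leq E(T)$ jointly in $t\in[0,T]$; the forward case produces $D(T\tau_k-)$ rather than $D(T\tau_k)$ in the limit, which is exactly what encodes the additional big jump singled out in the introduction.

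It remains to compensate the centering term and to let $\varepsilon\downarrow 0$. The vague convergence (\ref{konv.levymaß}), combined with the joint convergence $n^{-1}N_{tb_n}\schwach E(t)$ in $D([0,T],\mathbb{R}^+)$, yields
\begin{align*}
N_{tb_n}\cdot\mathbb{E}\bigl(A_n\X_1\mathbf{1}_{\varepsilon\leq\Vert A_n\X_1\Vert\leq\tau}\bigr) \schwach E(t)\int_{\varepsilon\leq\Vert x\Vert\leq\tau}x\,d\eta(x)
\end{align*}
uniformly in $t\in[0,T]$, and identically with $N_{tb_n}+1$ in the forward-coupled case. Together with the truncated convergence from the previous step this produces the centered statements (\ref{kvgz.1}) and (\ref{kvgz.2}) for each fixed admissible $\varepsilon$. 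Finally I would send $\varepsilon\downarrow 0$ along a sequence with $\eta(\mathbb{S}^{d-1}_\varepsilon)=0$ and conclude via Theorem 4.2 of \cite{billingsley:1999}, exactly as at the end of the proof of Theorem \ref{verm.lepage}. The main obstacle is the continuity of $\Psi^\varepsilon$ in the Skorokhod ${\rm J}_1$ topology, since the time marks $D(T\tau_k)$ of the limit point process can lie arbitrarily close to (or coincide with) genuine jumps of $D(\cdot)$; this is what forces the restriction to $T\in\mathcal{S}$ and the separate left- and right-continuous treatment of the time coordinates distinguishing the backward and forward limits.
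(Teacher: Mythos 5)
Your overall architecture matches the paper's: restrict to $\{\Vert x\Vert\geq\varepsilon\}$, apply the summation functional $t\mapsto\sum_{t_k\leq t}x_k$ to the point process limits of Lemma \ref{konvergenzlemma.pp} (using $\{T_k\leq tb_n\}=\{N_{tb_n}\geq k\}$ to identify the pre-limit with the truncated CTRW), handle the centering via $n^{-1}N_{tb_n}\schwach E(t)$ together with \eqref{konv.levymaß}, combine with Whitt's theorem, and finish with Theorem 4.2 of Billingsley. Up to that last step your argument is sound, and your remarks on why a.s.\ continuity of the summation functional needs $T\in\mathcal S$ are correct.

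The gap is in the final step. You write that you would ``conclude via Theorem 4.2 of \cite{billingsley:1999}, exactly as at the end of the proof of Theorem \ref{verm.lepage},'' but the two situations are not analogous. In Theorem \ref{verm.lepage} the convergence takes place in a countable product space, where removing the truncation is harmless coordinate by coordinate. Here Theorem 4.2 of Billingsley requires you to verify the uniform negligibility condition
\begin{align*}
\lim_{\varepsilon\downarrow 0}\limsup_{n\to\infty}\,
\mathbb P\Bigg(\sup_{0\leq t\leq T}\Bigg\Vert\sum_{k=1}^{N_{tb_n}}\Big(A_n\X_k\mathbf 1_{\Vert A_n\X_k\Vert<\varepsilon}
-\mathbb E\big(A_n\X_k\mathbf 1_{\Vert A_n\X_k\Vert<\varepsilon}\big)\Big)\Bigg\Vert\geq\delta\Bigg)=0,
\end{align*}
i.e.\ a uniform-in-$t$ bound on the centered sum of the \emph{many small} jumps up to the random time $N_{tb_n}$. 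This is the technical heart of the proof and does not follow from anything you have set up. The paper handles it with three ingredients you omit entirely: a generalization of Kolmogorov's maximal inequality to integrable stopping times (Lemma \ref{kolmogoroff.ungl} in the Appendix, applied with the stopping time $N_{Tb_n}+1$, since $N_{Tb_n}$ itself is not a stopping time for the relevant filtration); the renewal-theoretic asymptotics $\mathbb E(N_{t}+1)\sim (1-F_{J_1}(t))^{-1}\Gamma(1-\alpha)^{-1}\Gamma(1+\alpha)^{-1}$ via Karamata's theorem to control the resulting factor $\mathbb E(N_{Tb_n}+1)$; and the regular variation estimates of Theorem 6.3.4 and Corollary 6.3.9 of \cite{meerschaert:2001} to show $\lim_{\varepsilon\downarrow0}\limsup_n n\,\mathbb E\big(((A_n\X_1)^{(i)})^2\mathbf 1_{|(A_n\X_1)^{(i)}|<\varepsilon}\big)=0$. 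Without some substitute for this chain of estimates, the passage $\varepsilon\downarrow0$ is unjustified and the proof is incomplete.
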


\begin{proof}
Choose $T \in \mathcal S$ arbitrary.  
Similar to (\ref{einschr.abb}) we define another a.s. continuous restriction functional 
\begin{align*}
& \widetilde \pi' : M_p([0,\infty) \times [-\infty,\infty]^d\backslash \{0\}) 
  \rightarrow M_p([0,\infty) \times [-\infty,\infty]^d \backslash \mathbb K^d_\varepsilon),\\
& \widetilde \pi'(m) := m_{|[0,\infty)\times [-\infty,\infty]^d \backslash \mathbb K^d_\varepsilon} 
\end{align*}
for $\varepsilon >0$ such that $\eta(S^{d-1}_\varepsilon)=0$. Moreover we define the summation functional 
\begin{align*}
& \chi:M_p([0,\infty) \times [-\infty,\infty]^d \backslash \mathbb K^d_\varepsilon) \rightarrow D([0,T],\mathbb R^d)\\
& \chi\left(\sum_{k\in \mathbb N}\varepsilon_{(t_k,x_k)}\right)(t) = \left(\sum_{t_k\leq t} x_k\right)_{t\in [0,T]} \hspace{-0.2cm}.
\end{align*}
which is a.s. continuous in the point 
\begin{align*}
 \sum_{k \in \mathbb N} \varepsilon_{(D(T\cdot \tau_k), \widetilde\eta^\leftarrow (T^{-1}\Gamma_k,\V_k)\V_k 
 \mathbf 1_{\widetilde\eta^\leftarrow (T^{-1}\Gamma_k,\V_k)>\varepsilon})}
\end{align*}
for every $T \in \mathcal S$. 
A proof of the continuity of $\chi$ is given in \cite[Sec. 7.2.3]{resnick:2007} for the case $d=1$ and can easily be modified to hold for $d\geq1$. 
So we apply the a.s. continuous mapping $\chi \circ \pi$ to the associated point processes in Lemma \ref{konvergenzlemma.pp}. 
Considering the equality $\{T_n \leq t\} = \{N_t \geq n\}$ we receive
\begin{align}
& \chi \circ \pi\left(\sum_{k=1}^{\lfloor nT \rfloor} \varepsilon_{\left(b_n^{-1}T_k,A_n \X_k\right)} \right)(t) 
 = \left(\sum_{b_n^{-1} T_k \leq t} A_n \X_k \mathbf 1_{\Vert A_n\X_k\Vert \geq \varepsilon}\right)_{t \in [0,T]} \nonumber \\
& = \left(\sum_{k=1}^{N_{tb_n}} A_n \X_k \mathbf 1_{\Vert A_n\X_k\Vert \geq \varepsilon}\right)_{t \in [0,T]}
  \schwach  \chi \circ \pi\left(\sum_{k\in \mathbb N} \varepsilon_{(D(T\cdot \tau_k),
  \widetilde \eta^\leftarrow (T^{-1}\Gamma_k,\V_k)\V_k)} \right) \nonumber \\
& = \sum_{D(T\cdot \tau_k)\leq t} \widetilde \eta^\leftarrow (T^{-1}\Gamma_k,\V_k)\V_k 
  \mathbf 1_{\widetilde \eta^\leftarrow (T^{-1}\Gamma_k,\V_k)\geq \varepsilon}. \label{gl.1}
\end{align}
in $D([0,T],\mathbb R^d)$. To study the centering constants we use the convergence 
\begin{align*}
 \left(\frac{N_{tb_n}}{n}\right)_{t \geq 0} \schwach (E(t))_{t \geq 0}
\end{align*}
in $D([0,\infty),\mathbb R^+)$, proved in Corollary 3.4 of \cite{meerschaert:2004}. Considering (\ref{konv.levymaß}) this yields
\begin{align*}
& \left(\sum_{k=1}^{N_{tb_n}} \mathbb E\left(A_n \X_k \mathbf 1_{[\varepsilon,\tau]}(\Vert A_n \X_k\Vert)\right)\right)_{t \geq 0}
= \left(\frac{N_{tb_n}}{n} \int_{\varepsilon \leq \Vert x \Vert \leq \tau} x  ~nd\mathbb P^{A_n \X_1}(x)\right)_{t \geq 0}\\
& \schwach \left(E(t) \int_{\varepsilon \leq \Vert x \Vert \leq \tau} x~d\eta(x)\right)_{t \geq 0}
\end{align*} 
in $D([0,\infty),\mathbb R^+)$. Since the process $E(\cdot)$ has a.s. continuous sample paths, Theorem 4.1 in \cite{whitt:1980} 
allows us to put this and (\ref{gl.1}) together. We obtain
\begin{align*}
& \left(\sum_{k=1}^{N_{tb_n}} \left(A_n \X_k \mathbf 1_{\Vert A_n \X_k\Vert\geq \varepsilon} - 
  \mathbb E\left(A_n \X_k \mathbf 1_{\varepsilon \leq \Vert A_n \X_k \Vert \leq \tau}\right)\right)\right)_{t\in [0,T]} \\
& \schwach \left(\sum_{D(T\cdot \tau_k) \leq t} \left(\widetilde \eta^\leftarrow (\Gamma_k,\V_k)\V_k   
  \mathbf 1_{\widetilde \eta^\leftarrow ( T^{-1} \Gamma_k,\V_k) \geq \varepsilon}\right) - E(t) \int_{\varepsilon 
  \leq \Vert x \Vert \leq \tau} x ~d\eta(x)\right)_{t\in [0,T]}
\end{align*}
in $D([0,T],\mathbb R^d)$. Taking limits as $\varepsilon \downarrow 0$ this yields the desired result (\ref{kvgz.1}). By Theorem 4.2 of 
\cite{billingsley:1999} it remains to show
\begin{align*}
& \lim_{\varepsilon \downarrow 0} \limsup_{n \rightarrow \infty} ~\mathbb P\Bigg(\sup_{0 \leq t \leq T} \Bigg \Vert 
\sum_{k=1}^{N_{tb_n}}\left( A_n \X_k \mathbf 1_{\Vert A_n \X_k \Vert\geq \varepsilon} - \mathbb E\left(A_n \X_k 
\mathbf 1_{\varepsilon \leq \Vert A_n \X_k \Vert \leq \tau}\right) \right)\\
& - \sum_{k=1}^{N_{tb_n}} \left( A_n \X_k - \mathbb E\left(A_n \X_k \mathbf 1_{\Vert A_n \X_k \Vert \leq \tau} \right). 
\right)\Bigg \Vert \geq \delta \Bigg) = 0
\end{align*}
for all $\delta >0$. Using a version of the Kolmogorov-inequality for integrable stopping times given in the Appendix 
and the norm-inequality $\Vert \cdot \Vert \leq \Vert \cdot \Vert_1$ we obtain
\begin{align*}
& \mathbb P\Bigg(\sup_{0 \leq t \leq T} 
  \Bigg \Vert \sum_{k=1}^{N_{tb_n}}\left( A_n \X_k \mathbf 1_{\Vert A_n \X_k \Vert\geq \varepsilon} 
  - \mathbb E\left(A_n \X_k \mathbf 1_{\varepsilon \leq \Vert A_n \X_k \Vert \leq \tau}\right)\right)  \\
& - \sum_{k=1}^{N_{tb_n}} \left(A_n \X_k - \mathbb E\left(A_n \X_k \mathbf 1_{\Vert A_n \X_k \Vert \leq \tau}\right)\right)\Bigg 
  \Vert \geq\delta \Bigg)  \nonumber \\
& \leq \mathbb P\left( \max_{1\leq j\leq N_{Tb_n}}\left\Vert
   \sum_{k=1}^{j}  A_n \X_k \mathbf 1_{\Vert A_n \X_k \Vert< \varepsilon} 
  - \mathbb E\left(A_n \X_k \mathbf 1_{\Vert A_n \X_k \Vert < \varepsilon}\right) \right\Vert_1 \geq \delta\right) \\
& \leq \mathbb P\left(\sum_{i=1}^d \max_{1\leq j\leq N_{Tb_n}}
    \left|\sum_{k=1}^{j}  (A_n \X_k)^{(i)} \mathbf 1_{\Vert A_n \X_k \Vert< \varepsilon} 
  - \mathbb E\left((A_n \X_k)^{(i)} \mathbf 1_{\Vert A_n \X_k \Vert < \varepsilon}\right)\right|  \geq \delta\right) \\
& \leq \mathbb P\left(\bigcup_{i=1}^d \max_{1\leq j\leq N_{Tb_n}}
    \left|\sum_{k=1}^{j}  (A_n \X_k)^{(i)} \mathbf 1_{\Vert A_n \X_k \Vert< \varepsilon} 
  - \mathbb E\left((A_n \X_k)^{(i)} \mathbf 1_{\Vert A_n \X_k \Vert < \varepsilon}\right)\right| \geq \frac{\delta}{d}\right) \nonumber \\
& \leq \sum_{i=1}^d 
  \mathbb P\left( \max_{1 \leq j \leq N_{Tb_n}+1}  
  \left| \sum_{k=1}^{j}  (A_n \X_k)^{(i)} \mathbf 1_{\Vert A_n \X_k \Vert< \varepsilon} 
  - \mathbb E\left((A_n \X_k)^{(i)}\mathbf 1_{\Vert A_n \X_k \Vert < \varepsilon}\right)\right| \geq \frac{\delta}{d}\right) \nonumber \\
& \leq \left(\frac{\delta}{d}\right)^{-2}  \mathbb E\left(N_{Tb_n}+1 \right) \sum_{i=1}^d 
   Var\left((A_n \X_1)^{(i)} \mathbf 1_{\Vert A_n \X_1 \Vert < \varepsilon} \right)  \\
& \leq \left(\frac{\delta}{d}\right)^{-2} 
\mathbb E\left(N_{Tb_n}+1 \right) \sum_{i=1}^d 
  \mathbb E\left(\left((A_n \X_1)^{(i)} \mathbf 1_{\Vert A_n \X_1 \Vert < \varepsilon} 
  \right)^2\right),
\end{align*}
where $x^{(n)}$ denotes the $n$-th coordinate of the vector $x$. Note that we have to take $N_{tb_n}+1$ since $N_{tb_n}$ does 
not fulfill the conditions of Lemma \ref{kolmogoroff.ungl}. 
Now Theorem 9 in \cite{mallor:2006} states that $\mathbb E\left(N_t+1 \right)$ can asymptotically be expressed by the integrated tail
of the distribution function of $J_1$
\begin{align*}
 \mathbb E\left(N_t+1 \right) \Gamma(2-\alpha)\Gamma(1+\alpha) \sim \frac{t}{\int_0^t (1-F_{J_1}(x))dx},
\end{align*}
where $\Gamma$ denotes the gamma-function. By Karamata's Theorem the limit behavior of the function 
$t \mapsto \int_0^t (1-F_{J_1}(s))ds$ can be expressed by $F_{J_1}$
\begin{align*}
 \int_0^t (1-F_{J_1}(s))ds \sim \frac{t(1-F_{J_1}(t))}{1-\alpha}.
\end{align*}
Putting this together we obtain
\begin{align*}
 \mathbb E\left(N_t+1 \right) \sim (1-F_{J_1}(t))^{-1} \cdot \Gamma(1-\alpha)^{-1} \cdot \Gamma(1+\alpha)^{-1}.
\end{align*}
Defining $C:=(\Gamma(1-\alpha) ~\Gamma(1+\alpha) ~\eta^\alpha((T,\infty)))^{-1}$ for abbreviation, the inequality
$|x^{(n)}| \leq \Vert x\Vert$ yields
\begin{align*}
& \lim_{\varepsilon \downarrow 0} \limsup_{n \rightarrow \infty} 
\left(\frac{\delta}{d}\right)^{-2} \lim_{\varepsilon \downarrow 0} \limsup_{n \rightarrow \infty} 
\mathbb E\left(N_{Tb_n}+1 \right) \sum_{i=1}^d 
  \mathbb E\left(\left((A_n \X_1)^{(i)} \mathbf 1_{\Vert A_n \X_1 \Vert < \varepsilon} 
  \right)^2\right) \\
& \leq \lim_{\varepsilon \downarrow 0} \limsup_{n \rightarrow \infty} 
  \left(\frac{\delta}{d}\right)^{-2} C
 \cdot  \sum_{i=1}^d \lim_{\varepsilon \downarrow 0} \limsup_{n \rightarrow \infty} 
  n \mathbb E\left(((A_n \X_1)^{(i)})^2 \mathbf 1_{|(A_n \X_1)^{(i)}| < \varepsilon}\right).
\end{align*}
So it remains to show
\begin{align*}
\lim_{\varepsilon \downarrow 0} \limsup_{n \rightarrow \infty} 
n \mathbb E\left(((A_n \X_1)^{(i)})^2 \mathbf 1_{|(A_n \X_1)^{(i)}| < \varepsilon}\right)=0 
\end{align*}
for all $1 \leq i \leq d$. Defining the tail and truncated second moment of $\X_1$ in direction $v$ by 
\begin{align*}
 V(r,v) := \mathbb P(|\langle\X_1,v\rangle| > r), ~U(r,v) := \mathbb E(\langle\X_1,v\rangle^2 \mathbf 1_{|\langle\X_1,v\rangle| < r})
\end{align*}
for every $v \in \mathbb S^{d-1}$ and every $r>0$ an easy calculation shows that 
\begin{align*}
 \mathbb E\left(((A_n \X_1)^{(i)})^2 \mathbf 1_{|(A_n \X_1)^{(i)}| < \varepsilon}\right) 
= r_n^2 U(r_n^{-1} \varepsilon, v_n)
\end{align*}
holds. Here $r_n>0$ and $v_n \in \mathbb S^{d-1}$ are taken such that $A_n^* e_i = r_n v_n$ holds for every $n \in \mathbb N$, 
where $A^*$ is the adjoint of $A$ and $e_1,\ldots,e_d$ denotes the standard basis of $\mathbb R^d$. With this notation we have 
to analyse 
\begin{align} \label{3faktoren}
& n \cdot r_n^2 U\left( r_n^{-1} \varepsilon, v_n\right) 
 = \varepsilon^2 \frac{U(r_n^{-1} \varepsilon,v_n)}{\varepsilon^2 r_n^{-2} 
  V\left(r_n^{-1} \varepsilon ,v_n\right)} \cdot \frac{V\left(r_n^{-1} \varepsilon,v_n\right)}
  {V\left(\varepsilon^{-1}( r_n^{-1}\varepsilon) ,v_n \right)} \cdot n \cdot V(r_n^{-1} ,v_n).
\end{align}
One easily verifies that the third factor in (\ref{3faktoren}) is bounded by $\eta(\{ x\in \mathbb R^d :|x^{(i)}| > 1 \})$. 
Since the real parts $a_1 \leq \ldots \leq a_d$ of all eigenvalues of the operator $E$ are greater than $1/2$ we can find 
$\widetilde \varepsilon >0$ such that $2-\widetilde \varepsilon-a_1^{-1}>0$ holds. So an application of Theorem 6.3.4 of 
\cite{meerschaert:2001} yields the existence of a constant $C_1$, such that for $n \in \mathbb N$ large 
enough the second factor in (\ref{3faktoren}) is bounded by $C_1 \varepsilon^{2-\widetilde \varepsilon-a_1^{-1}}$. Finally, Corollary 6.3.9 in \cite{meerschaert:2001} yields that the first factor in (\ref{3faktoren}) is bounded by a constant $C_2$. Putting things together this
completes the proof.

The proof of the convergence \eqref{kvgz.2} works the same way.
\end{proof}

Theorem \ref{theorem.kvgz.ctrws} provides a series representation for the limit distribution. This representation might be useful for 
simulation purposes. Now it is of considerable interest to identify the scaling limits with the ones stated in \cite{henry:2010}. Let $\A(t-)^+$ denote the right-continuous version of the process $\A(t-)$, i.e. $\A(t-)^+$ is an element of $D([0,\infty),\mathbb R^d)$. Using a continuous mapping approach on $D([0,\infty),\mathbb R^d)$, Straka and Henry \cite{henry:2010} show that the limit laws in \eqref{kvgz.1} and \eqref{kvgz.2} coincide with the distribution of $\A(E(t)-)^+$, respectively $\A(E(t))$. More precisely, Straka and Henry follow a more general approach and consider triangular arrays instead of i.i.d.\ sequences following \cite{meerschaert:2008} and they determine the joint scaling limit of the time process together with the CTRW. Note that for an uncoupled CTRW, where waiting times and jumps are independent, the processes $(\A(t))_{t\geq0}$ and $(E(t))_{t\geq0}$ are independent and thus the two limiting processes $(\A(E(t)-)^+)_{t\geq0}$ and $(\A(E(t)))_{t\geq0}$ coincide in distribution. This equality fails for a coupled CTRW and differences in the two limiting processes are illustrated in \cite{jurlewicz:2010}, where the limiting distributions, their Fourier-Laplace transforms and the corresponding governing pseudo differential equations for their densities are given in terms of the joint distribution of $(D(t),\A(t))_{t\geq0}$ and its L\'evy exponent.

Our arguments for the identification of the scaling limits are based on the following equalities:
\begin{align} \label{mengengleichheiten}
 \{D(x)<t\} = \{x<E(t)\},~\{D(x-)\leq t\} = \{x \leq E(t)\}. 
\end{align}
The left-hand side of (\ref{mengengleichheiten}) is already proven in $(3.2)$ of \cite{meerschaert:2004}. For the proof of the 
right-hand side assume $D(x-)\leq t$ holds. So we have $D(y) \leq t$ for all $y<x$. Hence $x\leq E(t)$. Otherwise if 
$D(x-)>t$ holds, there exists an $\varepsilon >0$ such that $D(y)>t$ holds for all $y\geq x-\varepsilon$. Hence 
$E(t) \leq x-\varepsilon <x$.

\begin{cor}
The convergence 
 \begin{align} \label{reihendarst.1}
  \sum_{k=1}^{N_{tb_n}} (A_n \X_k - \mathbb E(A_n \X_k \mathbf 1_{\Vert A_n \X_k\Vert \leq \tau})) \schwach \A(E(t)-)^+
 \end{align}
and
\begin{align} \label{reihendarst.2}
 \sum_{k=1}^{N_{tb_n}+1} (A_n \X_k - \mathbb E(A_n \X_k 1_{\Vert A_n \X_k\Vert \leq \tau})) \schwach \A(E(t))
\end{align}
holds in $D([0,T],\mathbb R^d)$ for every $T \in \mathcal S$ and every $\tau >0$ such that $\eta(\mathbb S^{d-1}_\tau)=0$. 
\end{cor}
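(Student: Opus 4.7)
The plan is to apply Theorem \ref{theorem.kvgz.ctrws}, which already delivers the scaling limits in series form, and to identify those series on the right-hand sides of \eqref{kvgz.1} and \eqref{kvgz.2} with $\A(E(\cdot)-)^+$ and $\A(E(\cdot))$, respectively. The key tools are the series representation \eqref{reihendarst.} of the operator L\'evy motion $\A$ and the set identities \eqref{mengengleichheiten}.

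For the forward case \eqref{reihendarst.2} I would substitute $s=E(t)$ into \eqref{reihendarst.} and use $\{T\tau_k \leq E(t)\} = \{D(T\tau_k -) \leq t\}$ from \eqref{mengengleichheiten} to rewrite the index set of the sum. The resulting expression coincides pathwise with the right-hand side of \eqref{kvgz.2}, so that \eqref{reihendarst.2} follows immediately from Theorem \ref{theorem.kvgz.ctrws}. For the backward case \eqref{reihendarst.1}, the analogous substitution combined with $\{T\tau_k < E(t)\} = \{D(T\tau_k) < t\}$ gives
\[
\A(E(t)-) = \lim_{\varepsilon \downarrow 0}\Big(\sum_{D(T\tau_k) < t} \widetilde\eta^\leftarrow(T^{-1}\Gamma_k, \V_k)\V_k \mathbf 1_{\widetilde\eta^\leftarrow(T^{-1}\Gamma_k, \V_k) \geq \varepsilon} - E(t)\int_{\varepsilon \leq \Vert x \Vert \leq \tau} x\, d\eta(x)\Big).
\]
The limit in \eqref{kvgz.1} is the same expression but with the non-strict condition $D(T\tau_k)\leq t$; for each fixed $t$ the two versions differ only on the event $\{\exists k: D(T\tau_k) = t\}$, which has probability zero. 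Since the limit in \eqref{kvgz.1} is right-continuous in $t$ (as a limit in $D([0,T],\mathbb R^d)$, and directly because $E$ is continuous and $t\mapsto\sum_{D(T\tau_k)\leq t}$ is right-continuous), while $t\mapsto \A(E(t)-)$ is left-continuous, the series in \eqref{kvgz.1} must be the right-continuous modification $\A(E(\cdot)-)^+$, proving \eqref{reihendarst.1}.

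The only delicate point is the pathwise identification in the backward case: verifying that the right-continuous ``$\leq t$''-series is pointwise equal to $\A(E(\cdot)-)^+$. This reduces to the short local observation that at each jump time $t = D(T\tau_k)$ the extra summand $\widetilde\eta^\leftarrow(T^{-1}\Gamma_k, \V_k)\V_k$ picked up by the ``$\leq$''-series equals the jump of $\A$ at $T\tau_k = E(t)$, which matches the right-limit of $\A(E(\cdot)-)$ at $t$; together with the elementary fact that two right-continuous functions agreeing off a countable subset coincide, this completes the identification. Apart from this local check, the corollary is an immediate consequence of \eqref{mengengleichheiten}, \eqref{reihendarst.}, and Theorem \ref{theorem.kvgz.ctrws}.
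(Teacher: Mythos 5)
Your proposal is correct and follows essentially the same route as the paper: both cases rest on applying the identities \eqref{mengengleichheiten} to the series representation \eqref{reihendarst.} and to the limits in Theorem \ref{theorem.kvgz.ctrws}, with the backward case handled by splitting $\{D(T\tau_k)\leq t\}$ into the strict part (giving $\A(E(t)-)$) and the boundary part $\{D(T\tau_k)=t\}$ (accounting for right-continuity). Your extra remarks on identifying the ``$\leq$''-series with the right-continuous modification merely spell out what the paper states more tersely.
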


\begin{proof}
 The convergence (\ref{reihendarst.2}) can easily be verified applying the right-hand side of (\ref{mengengleichheiten}) 
to the series representation (\ref{reihendarst.}). To prove (\ref{reihendarst.1}) we apply the left-hand side of 
(\ref{mengengleichheiten}) to (\ref{kvgz.1}) and obtain
\begin{align}
&\sum_{k=1}^{N_{tb_n}} (A_n \X_k - \mathbb E_\tau(A_n \X_k)) \nonumber \\
& \schwach \lim_{\varepsilon \downarrow 0}  \left(\sum_{D(T\cdot \tau_k)\leq t} 
\left( \widetilde \eta^\leftarrow (T^{-1} 
\Gamma_k,\V_k)\V_k 
\mathbf 1_{ \widetilde \eta^\leftarrow (T^{-1} \Gamma_k,\V_k) \geq \varepsilon} \right) 
- E(t) \int_{\varepsilon \leq \Vert x \Vert \leq \tau}x ~d\eta(x)\right) \nonumber \\
& = \lim_{\varepsilon \downarrow 0}  \Bigg(\sum_{T\cdot  \tau_k< E(t)} 
\left( \widetilde \eta^\leftarrow (T^{-1} 
\Gamma_k,\V_k)\V_k 
\mathbf 1_{ \widetilde \eta^\leftarrow (T^{-1} \Gamma_k,\V_k) \geq \varepsilon} \right) \nonumber \\
& + \sum_{D(T\cdot \tau_k)=t} 
\left( \widetilde \eta^\leftarrow (T^{-1} 
\Gamma_k,\V_k)\V_k 
\mathbf 1_{ \widetilde \eta^\leftarrow (T^{-1} \Gamma_k,\V_k) \geq \varepsilon} \right) 
- E(t) \int_{\varepsilon \leq \Vert x \Vert \leq \tau}x ~d\eta(x) \Bigg). \label{hilfsgl.1}
\end{align}
in $D([0,T],\mathbb R^d)$. As we already stated
\begin{align*}
\lim_{\varepsilon \downarrow 0}  \Bigg(\sum_{T\cdot \tau_k< E(t)} 
\left( \widetilde \eta^\leftarrow (T^{-1} 
\Gamma_k,\V_k)\V_k 
\mathbf 1_{ \widetilde \eta^\leftarrow (T^{-1} \Gamma_k,\V_k) \geq \varepsilon} \right) 
- E(t) \int_{\varepsilon \leq \Vert x \Vert \leq \tau}x ~d\eta(x) \Bigg) \\
\end{align*}
is a series representation of $\A(E(t)-)$. The extra summands only have to be considered if a jump occurs at a time $t$ with $D(T\cdot \tau_k )=t$. This yields the right-continuity of the 
limit and we have proven (\ref{reihendarst.1}).
\end{proof}

 \appendix

\section{A generalization of Kolmogorov's inequality}

The following generalization of Kolmogorov's inequality can be shown by standard techniques. However, we were not able 
to find a suitable proof in the literature and will only give a sketch of proof. 

\begin{lem} \label{kolmogoroff.ungl}
 Let $(Y_n)_{n \in \mathbb N}$ be i.i.d. with $\mathbb E(Y_1)=0$ and $T$ be an $\mathbb N_0$-valued integrable stopping time with 
respect to the filtration $\mathcal F_n := \sigma(Y_1,\ldots,Y_n)$. Then 
\begin{align*}
 \mathbb P\left(\max_{1 \leq k \leq T} \left|\sum_{j=1}^k Y_j\right|\geq \delta \right) 
 \leq \delta ^{-2} \cdot \mathbb E(T) \cdot Var(Y_1)
\end{align*}
holds for alle $\delta >0$. 
\end{lem}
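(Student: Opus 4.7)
The plan is to reduce the statement to the standard Kolmogorov argument via a first-passage stopping time and then close the proof with Wald's second identity. Write $S_k := \sum_{j=1}^k Y_j$; we may assume $v := Var(Y_1) < \infty$, since otherwise the claimed bound is trivial. Define the first-passage time
$$\tau := \inf\{ k \geq 1 : |S_k| \geq \delta \}, \qquad \inf\emptyset := \infty.$$
Both $\tau$ and $\rho := \tau \wedge T$ are $(\mathcal F_n)$-stopping times, and $\rho \leq T$, so $\rho$ is integrable.

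The key observation is the pointwise identity of events
$$\left\{ \max_{1 \leq k \leq T} |S_k| \geq \delta \right\} = \{ \tau \leq T \},$$
which is immediate from the definition of $\tau$ (trivially on $\{T=0\}$). On $\{\tau \leq T\}$ one has $\rho = \tau$, hence $|S_\rho| = |S_\tau| \geq \delta$, and therefore
$$\delta^2 \, \mathbb P\!\left( \max_{1 \leq k \leq T} |S_k| \geq \delta \right) \leq \mathbb E\bigl[ S_\rho^2 \, \mathbf 1_{\{\tau \leq T\}} \bigr] \leq \mathbb E[S_\rho^2].$$
Wald's second identity applied to $\rho$ then yields $\mathbb E[S_\rho^2] = v \cdot \mathbb E[\rho] \leq v \cdot \mathbb E[T]$, and combining the two displays gives the claim.

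The one non-routine ingredient is the justification of Wald's identity for the possibly unbounded stopping time $\rho$. I would first invoke optional stopping for the martingale $(S_k^2 - kv)_{k \geq 0}$ at the bounded stopping time $\rho \wedge N$, obtaining $\mathbb E[S_{\rho \wedge N}^2] = v \cdot \mathbb E[\rho \wedge N]$. Letting $N \to \infty$, the right-hand side converges to $v \cdot \mathbb E[\rho]$ by monotone convergence. For the left-hand side, orthogonality of the martingale increments gives $\mathbb E[S_{\rho \wedge N}^2] \leq v \cdot \mathbb E[T]$, so $(S_{\rho \wedge N})_N$ is an $L^2$-bounded martingale and converges in $L^2$ to $S_\rho$; passing to the limit yields $\mathbb E[S_\rho^2] = v \cdot \mathbb E[\rho]$. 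This truncation-and-limit step is the main technical point, but it is standard.
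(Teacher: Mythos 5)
Your proof is correct and follows essentially the same route as the paper's: stop the partial-sum process at the first passage over level $\delta$ (the paper encodes this via the frozen process $M_k=S_{\tau\wedge k}$ rather than naming $\tau$ explicitly), apply Chebyshev/Markov to the stopped sum, and close with Wald's second identity, handling the unbounded stopping time by truncation and a martingale-convergence limit. Your version, which applies Wald directly to $\rho=\tau\wedge T$ instead of comparing the squared increments of $M$ and $S$, is a clean equivalent of the paper's argument.
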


\begin{proof}
 First we restrict our attention to the truncated stopping time $T \wedge n$. An easy calculation shows that 
\begin{align*}
 \mathbb E \left(\sum_{k=1}^{T\wedge n}\left(S_k - S_{k-1} \right)^2 \right) 
= \mathbb E(S_{T\wedge n}^2)
\end{align*}
holds, where $S_n := \sum_{k=1}^n Y_k$ denotes the $n$-th partial sum. Defining 
\begin{align*}
M_0 := 0,~ M_{k+1} := \begin{cases}
                       S_{k+1}, & \text{ if } \max_{1 \leq j \leq k} \left|S_j\right| < \delta\\
		       M_k, & \text{ else}
                      \end{cases}
\end{align*}
the same calculation shows
\begin{align*}
 \mathbb E \left(\sum_{k=1}^{T\wedge n} \left(M_k - M_{k-1} \right)^2 \right)
 = \mathbb E(M_{T\wedge n}^2) - 2 \cdot \mathbb E\left(\sum_{k \in \mathbb N} (M_k - M_{k-1}) S_{k-1} 
  \mathbf 1_{k \leq T\wedge n} \right).
\end{align*}
Using the definition of $M_n$ one can show
\begin{align*}
 \mathbb E \left(\sum_{k=1}^{T\wedge n}\left(M_k - M_{k-1} \right)^2 \right)
= \mathbb E(M_{T\wedge n}^2).
\end{align*}
Now $\left| M_k - M_{k-1} \right| \leq \left| S_k - S_{k-1} \right|$ holds for all $k \in \mathbb N$. Hence the Markov-inequality 
yields 
\begin{align*}
& \mathbb P \left(\max_{1 \leq k \leq T\wedge n} \left|S_k\right| \geq \delta \right)= \mathbb P\left( \left| M_{T\wedge n}
  \right|\geq\delta\right) 
  \leq \delta^{-2} ~ \mathbb E\left(M_{T\wedge n}^2 \right) \\
&  = \delta^{-2} ~ \mathbb \mathbb E \left(\sum_{k=1}^{T\wedge n} \left(M_k - M_{k-1} \right)^2 \right)  
 \leq \delta^{-2} ~ \mathbb \mathbb E \left(\sum_{k=1}^{T\wedge n}\left(S_k - S_{k-1} \right)^2 \right) 
  = \delta^{-2} ~ \mathbb E\left(S_{T\wedge n}^2 \right). 
\end{align*}
An application of Wald's inequality yields the desired result for $T\wedge n$. The generalization for the stopping time $T$ 
follows by the martingale convergence theorem. 
\end{proof}

\begin{rem}
 Let $(Z_n)_{n \in \mathbb N}$ be a sequence such that $(Y_n,Z_n)_{n \in \mathbb N}$ is i.i.d.. Then Lemma \ref{kolmogoroff.ungl} 
also holds replacing $\mathcal F_n$ by $\widetilde{\mathcal F_n} := \sigma((Y_1,Z_1),\ldots,(Y_n,Z_n))$. 
\end{rem}





\begin{thebibliography}{00}

\bibitem{barczyk:2010}	A. Barczyk, A. Janssen and M. Pauly,
			The asymptotics of L-statistics for non i.i.d. variables with heavy tails,
			 \emph{Probab. Math. Stat.} \textbf{31} (2011), 285--299.
\bibitem{kern:2004}   P. Becker-Kern,
		      Random sums of independent random vectors attracted by (semi)-stable hemigroups,
		      \emph{J. Appl. Anal.} \textbf{10} (2004), 83--104. 
\bibitem{becker-kern:2004} P. Becker-Kern, M.M. Meerschaert and H.P. Scheffler, 
		Limit theorems for coupled continuous time random walks, 
		\emph{Ann. Probab.} \textbf{32} (2004), 730--756. 
		Correction: \emph{Ann. Probab.} \textbf{40} (2012), 890--891.
\bibitem{bertoin:1996} J. Bertoin,
			\emph{L\'evy Processes},
			Cambridge University Press (1996), Cambridge.
\bibitem{billingsley:1999} P. Billingsley, 
                       \emph{Convergence of Probability Measures},
                       Wiley Series in Probability and Statistics (1968), Wiley, New York.
\bibitem{CheHofSok:2009} A.V. Chechkin, M. Hofmann and I.M. Sokolov,
			Continuous-time random walk with correlated waiting times.
			\emph{Phys. Rev. E} \textbf{80} (2009), 031112.
\bibitem{feigin:1996} P. Feigin, M.F. Krantz and S.I. Resnick,
			Parameter estimation for moving averages with positive innovations,
			\emph{Ann. Appl. Probab.} \textbf{6} (1996), 1157--1190.
\bibitem{feguson:1972} T.S. Ferguson and M.J. Klass,
			A representation of independent increment processes without Gaussian components,
			\emph{Ann. Math. Stat.} \textbf{43} (1972), 1634--1643.
\bibitem{finkelstein:1989} M. Finkelstein and H.G. Tucker,
			A necessary and sufficient condition for convergence in law of random sums of random variables   				under nonrandom centering,
			\emph{Proc. Amer. Math. Soc.} \textbf{107} (1989), 1061-1-070.
\bibitem{gnedenko:1968} B. Gnedenko, and A.V. Kolmogorov,
                       \emph{Limit Distributions for Sums of Independent Random Variables},
                       Addison-Wesley (1968), Reading, MA.
\bibitem{hahn:1989}    M.G. Hahn, W.N. Hudson und J.A. Veeh,
		      Operator stable laws: Series representation and domains of normal attraction,
		      \emph{J. Theor. Probab.} \textbf{2} (1989), 3--35. 
\bibitem{janssen:1990} 	A. Janssen and D.M. Mason,
			Non-standard rank tests,
			\emph{Lect. Notes Stat.} \textbf{65} (1990), Springer, New York.
\bibitem{jara:2011} M. Jara and T. Komorowski, Limit theorems for some continuous-time random walks, \emph{Adv. 		Appl. Probab.} \textbf{43} (2011), 782--813.
\bibitem{jurlewicz:2010} A. Jurlewicz, P. Kern, M.M. Meerschaert and H.P. Scheffler,
			Fractional governing equations for coupled random walks,
		       preprint (2011), to appear in \emph{Comp. Math. Appl.}, doi:10.1016/j.camwa.2011.10.010.
\bibitem{JurMeeSch} A. Jurlewicz, M.M. Meerschaert and H.P. Scheffler,
			Cluster continuous time random walks.
			\emph{Studia Math.} \textbf{205} (2011), 13--30.
\bibitem{kallenberg:1986} O. Kallenberg,
		      \emph{Random Measures},
		      Akad.-Verl. (1986), Berlin.
\bibitem{klafter:1987} J. Klafter, A. Blumen and M.F. Shlesinger, Stochastic pathways to anomalous diffusion, 			    \emph{Phys. Rev. A} \textbf{35} (1987), 3081--3085.
\bibitem{kotulski:1995} M. Kotulski, Asymptotic distributions of the continuous time random walks: A probabilistic approach, \emph{J. Statist. Phys.} \textbf{81} (1995), 777--792.
\bibitem{lehmann:2005} E.L. Lehmann and J.P. Romano,
		      \emph{Testing Statistical Hypotheses},
		      Springer (2005), New York.
\bibitem{lepage:1981} R. LePage,
		      Multidimensional infinitely divisible variables and processes part II,
		      \emph{Lect. Notes Math.} \textbf{860} (1981), 297--284.
\bibitem{lepagezinn:1981} R. LePage, M. Woodroofe and J. Zinn,
		      Convergence to a stable distribution via order statistics,
		      \emph{Ann. Probab.} \textbf{9} (1981), 624--632. 
\bibitem{lindvall:1973} T. Lindvall,
		      Weak convergence of probability measures and random functions in the function space $D[0,\infty)$,
		      \emph{J. Appl. Probab.} \textbf{10} (1973), 109-121.
\bibitem{MagMetSzcZeb} M. Magdziarz, R. Metzler, W. Szczotka and P. Zebrowski,
			Correlated continuos-time random walks -- scaling limits and Langevin picture.
			\emph{J. Stat. Mech.} (2012), P04010.
\bibitem{mallor:2006} F. Mallor and E. A.M. Omey,
		      \emph{Univariate and Multivariate Weighted Renewal Theory},
		      Public University of Navarra (2006), Pamplona.
\bibitem{meerschaert:2001} M.M. Meerschaert and H.P. Scheffler,
		      \emph{Limit Distributions for Sums of Independent Random Vectors},
		      Wiley Series in Probability and Statistics (2001), Wiley, New York.
\bibitem{meerschaert:2004} M.M. Meerschaert and H.P. Scheffler,
		      Limit theorems for continuous-time random walks with infinite mean waiting times,
		      \emph{J. Appl. Probab.} \textbf{41} (2004), 623--638.
\bibitem{meerschaert:2008} M.M. Meerschaert and H.P. Scheffler,
			Triangular array limits for continuous time random walks,
			\emph{Stoch. Process. Appl.} \textbf{118} (2008), 1606--1633.
			Erratum: \emph{Stoch. Process. Appl.} \textbf{120} (2010), 2520--2521.
\bibitem{MetKla:2000} R. Metzler and J. Klafter, 
			The random walk's guide to anomalous diffusion: A fractional dynamics approach.
			\emph{Phys. Rep.} \textbf{339} (2000), 1--77.
\bibitem{montroll:1965} E. Montroll and G.H. Weiss,
			Random walks on lattices. II,
			\emph{J. Math. Phys.} \textbf{6} (1965), 167--181.
\bibitem{resnick:2007} S.I. Resnick,
		      \emph{Heavy-Tail Phenomena},
		      Springer (2007), New York.
\bibitem{rosinski:2001} J. Rosi\'nski,
			Series representations of L\'evy processes from the perspective of point processes,
			In: O.E. Barndorff-Nielsen et al. (eds.) \emph{ L\'evy Processes - Theory and Applications, Birkh\"auser (2001), Boston, pp. 401--415.}
\bibitem{scalas:2004} E. Scalas,
		      Five years of continuous-time random walks in econophysics,
		      \emph{Proc. WEHIA} \textbf{55} (2004), 223--230.
\bibitem{scheffler:1998} H.P. Scheffler,
		      Series representation for operator semistable laws and domains of normal attraction,
		      \emph{J. Math. Sci.} \textbf{92} (1998), 4062--4084.
\bibitem{schumer:2009}	R. Schumer, M.M. Meerschaert and B. Baeumer,
			Fractional advection-dispersion equations for modeling transport at the Earth surface,
			\emph{J. Geophys. Res.} (2009) \textbf{114}, F00A07.
\bibitem{shlesinger:1982} M. Shlesinger, J. Klafter and Y.M. Wong,
			  Random walks with infinite spatial and temporal moments,
			  \emph{J. Stat. Phys.} \textbf{27} (1982), 499--512.
\bibitem{StaWer:2010} A. Stanislavsky and K. Weron,
			Anomalous diffusion with under- and overshooting subordination: A competition between the very large jumps in physical and operational times.
			\emph{Phys. Rev. E} \textbf{82} (2010), 051120.
\bibitem{henry:2010} P. Straka and B.I. Henry,
		      Lagging and leading coupled continuous time random walks, renewal times and their joint limits,
		      \emph{Stoch. Process. Appl.} \textbf{121} (2011), 324--336.
\bibitem{SzcZeb:2012} W. Szczotka and P. \.Zebrowski,
			On fully coupled continuous time random walks.
			\emph{Appl. Math. (Warsaw)} \textbf{39} (2012), 87--102.
\bibitem{WerJurMagWerTrz:2010} K. Weron, A. Jurlewicz, M. Magdziarz, A. Weron and J. Trzmiel,
			Overshooting and undershooting subordination scenario for fractional two-power-law relaxation responses.
			\emph{Phys. Rev. E} \textbf{81} (2010), 041123.
\bibitem{whitt:1980}	W. Whitt, 
			Some useful functions for functional limit theorems,
			\emph{Math. Oper. Res.} 5 (1980), S. 67--85.
\end{thebibliography}



\end{document}